\def\tbcaption{\def\@captype{table}\caption}
\newtheorem{theorem}{Theorem}[section]
\newtheorem{lemma}[theorem]{Lemma}
\newtheorem{proposition}[theorem]{Proposition}
\newtheorem{cor}[theorem]{Corollary}
\theoremstyle{definition}
\newtheorem{definition}[theorem]{Definition}
\newtheorem{conjecture}{Conjecture}[section]
\theoremstyle{remark}
\newtheorem{remark}[theorem]{Remark}
\numberwithin{equation}{section}
\newcommand{\beq}[1]{\begin{equation}\label{#1}}
\newcommand{\eeq}{\end{equation}}
\def\KK{\mathbb K}
\def\QQ{\mathbb Q}
\def\ZZ{\mathbb Z}
\def\RR{\mathbb R}
\def\NN{\mathbb N}
\begin{document}
\title[Extensions of $D(4)$-pairs $\{a, ka\}$ with $k\in \{7,8,10,11,12,13\}$]{ Extensions of $D(4)$-pairs $\{a, ka\}$ with $k\in \{7,8,10,11,12,13\}$}

\author[M. Bliznac Trebje\v{s}anin and P. Radić]{Marija {Bliznac Trebje\v{s}anin} and Pavao Radić}
\date{\today}

\begin{abstract}
We study the extensibility of $D(4)$-pairs $\{a,b\}$, where $b = ka$ and $k \in \{7,8,10,11,12,13\}$. Firstly, we show that it can be extended to a $D(4)$-triple with an element c, which is a member of a family of positive integers depending on a. Then, we prove that such a triple has a unique extension to a $D(4)$-quadruple.
\end{abstract}
\maketitle

\noindent{\it 2020 {Mathematics Subject Classification:}} 11D09, 11B37, 11J68, 11J86
\noindent{\it Keywords}: Diophantine $m$-tuples, Pellian equations, Linear forms in logarithms, Reduction method.

\section{Introduction}\label{intr}

\begin{definition}Let $n\neq0$ be an integer. We call a set of $m$ distinct positive integers a $D(n)$-$m$-tuple, or $m$-tuple with the property $D(n)$, if the product of any two of its distinct elements increased by $n$ is a perfect square.
\end{definition}
We research the $n=4$ case, which has many similarities to the classical $n=1$ case. First author and Filipin proved in \cite{blizfil} the nonexistence of $D(4)$-quintuples.

For a $D(4)$-triple $\{a,b,c\}, a<b<c$, we define
 \[ d_{\pm}(a,b,c)=a+b+c+\frac{1}{2}(abc \pm \sqrt{(ab+4)(ac+4)(bc+4)}).\]
 
$D(4)$-quadruple  $\{a,b,c,d_{+}\}$ is called a \textbf{regular quadruple}
 if $d_{-} \neq 0$, then $\{a,b,c,d_{-}\}$ is a regular $D(4)$-quadruple with $d_{-}<c$. It is easy to verify that $c=d_+(a,b,d_-)$.

In both the classical case, $n=1$, and $n=4$, conjectures about the uniqueness of an extension of a triple to a quadruple with a larger element are still open.

\begin{conjecture}\label{conj}
Any $D(4)$-quadruple is regular.
\end{conjecture}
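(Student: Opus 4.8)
The plan is to fix a $D(4)$-triple $\{a,b,c\}$ with $a<b<c$ and to show that any $d>c$ completing it to a $D(4)$-quadruple must equal $d_+(a,b,c)$, which by the discussion above is precisely the assertion that the quadruple is regular. Writing $ab+4=r^2$, $ac+4=s^2$, $bc+4=t^2$ for the data of the triple and $ad+4=x^2$, $bd+4=y^2$, $cd+4=z^2$ for the new element, I would eliminate $d$ to obtain the two simultaneous Pellian equations
\[ a z^2 - c x^2 = 4(a-c), \qquad b z^2 - c y^2 = 4(b-c). \]
The integer solutions of each equation split into finitely many classes, and within each class $z$ runs through a binary recurrence sequence; the quadruple then corresponds to a coincidence $z=v_m=w_n$ between a solution of the first equation and a solution of the second. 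Thus the whole problem reduces to determining all such coincidences and showing the only one with $d>c$ is the regular one.

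The quantitative heart of the argument has three standard ingredients. First, a \emph{gap principle}: comparing consecutive terms of the recurrences shows that any nonregular solution must be enormous, yielding a lower bound for $d$ (and hence for the indices $m,n$) that is polynomial, and after iteration essentially exponential, in $c$. Second, \emph{linear forms in logarithms}: each recurrence term is, up to a bounded factor, a power $\alpha^m$ (respectively $\beta^n$) of a fundamental unit, so the coincidence $v_m=w_n$ produces a linear form $\Lambda=m\log\alpha-n\log\beta+\log\gamma$ that is exponentially small in $m$; Matveev's theorem bounds $\Lambda$ from below, and combining the two estimates caps $m$ and $n$ by an explicit constant. Third, a \emph{reduction step} (Baker--Davenport reduction via continued fractions, or LLL) collapses that astronomically large bound to a small range, after which the finitely many remaining cases are checked to contain only $d=d_+$.

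The decisive difficulty, and the reason Conjecture~\ref{conj} as stated remains open, is \emph{uniformity}. Every constant produced by the three steps above depends on $a$, $b$, $c$, whereas the conjecture asks for \emph{all} triples simultaneously. For a single triple, or for a suitably structured one-parameter family in which the Baker bound and the reduction can be carried out uniformly, the scheme goes through and terminates in a finite computation; but for arbitrary $a,b,c$ the bounds grow with the parameters, so no finite verification can complete the proof, and no gap principle is known that forces $c$ itself to be absolutely bounded. Overcoming this — either by converting the parameter-dependent Baker estimates into a single absolute bound on $c$, or by introducing a genuinely new structural constraint on the recurrences — is exactly the obstacle that I would expect to dominate the entire effort.
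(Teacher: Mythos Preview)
Your assessment is accurate, but note that the paper does not prove Conjecture~\ref{conj}: it is stated there precisely as an open conjecture, and the paper's actual result is Theorem~\ref{main result}, which establishes the conjecture only for quadruples of the shape $\{a,ka,c,d\}$ with $k\in\{7,8,10,11,12,13\}$. So there is no ``paper's own proof'' of this statement to compare against.

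That said, your outline correctly captures the strategy the paper uses for those special families --- the system of Pellian equations, the coincidence $z=v_m=w_n$ (and the auxiliary coincidences $y=A_n=B_l$, $x=Q_m=P_l$), lower bounds on the indices from congruence arguments, Matveev's theorem for the upper bound, and Baker--Davenport reduction to finish --- and you correctly identify the uniformity obstruction that keeps the full conjecture open. The paper exploits the special structure $b=ka$ to make every constant explicit in $a$ alone, and then the recurrence \eqref{sequence-a_p} discretises $a$ so that the reduction terminates in a finite computation; this is exactly the kind of ``suitably structured one-parameter family'' you allude to. In short: you have not proposed a proof of the conjecture, but you have given a fair diagnosis of why none exists, and your diagnosis matches the paper's implicit stance.
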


In this paper, we study the extensibility of $D(4)$-pairs $\{a,b\}$, where $b = ka$ and $k \in \{7,8,10,11,12,13\}$ are defined by following the methods in \cite{glavni}. So, the main theorem we prove is:
\begin{theorem}\label{main result}
Let $k$ be a positive integer such that $k\in \{7, 8, 10,$ $11,12,13\}.$ If $\{a, b, c, d\}$ is a $D(4)$-quadruple with $b=ka$, then it is regular. In other words, we have $d=d_{\pm}.$
\end{theorem}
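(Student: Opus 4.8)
The plan is to run the by-now-standard pipeline for such problems: pass to systems of Pellian equations, turn coincidences among their solutions into a small linear form in three logarithms, bound the indices by Matveev's theorem, and finish with a Baker--Davenport / LLL reduction; the relation $b=ka$ with $k$ small is what makes every step effective and uniform. \emph{First, I would restrict $c$.} Since $a(ka)+4=ka^{2}+4$ must be a square, say $r^{2}$, the admissible $a$ already form finitely many binary recurrence sequences coming from the Pell equation $r^{2}-ka^{2}=4$ (here $k$ is a non-square, which is exactly why $k=9$ is absent from the list). Order the quadruple as $a<b=ka<c<d$ (the remaining configurations of the pair $\{a,ka\}$ among the four elements are handled by the same arguments after relabelling). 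Using the classification of $D(4)$-triples, the identity $c=d_{+}(a,b,d_{-})$ quoted above, and the gap principle for $D(4)$-quadruples — which forces $d$ to be essentially as large as $abc$ unless the quadruple is regular — one shows that either $d=d_{+}(a,b,c)$ and we are done, or $c$ is small, lying in an explicit short list of ``low-generation'' triples over $\{a,ka\}$, each of the form $c=c(a)$ a fixed polynomial in $a$ and $r$ (for instance $c=a+b\pm2r$ together with a few further values). This is the family mentioned in the abstract, and the smallness of $k$ is what keeps the list finite and manageable.

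\emph{Next, I would set up the two Pellian equations for the top element.} Fix such a triple $\{a,b,c\}$ and suppose $\{a,b,c,d\}$ is a quadruple; write $ad+4=x^{2}$, $bd+4=y^{2}$, $cd+4=z^{2}$. Eliminating $d$ gives
\[
cx^{2}-az^{2}=4(c-a),\qquad cy^{2}-bz^{2}=4(c-b),
\]
two Pellian equations whose solution sets decompose into finitely many classes, each parametrised by a fundamental solution that is elementarily bounded in terms of $a,b,c$. Each class produces a binary recurrence $z=v_{m}$ (respectively $z=w_{n}$) with small $v_{0},v_{1}$ and $v_{m+2}=Pv_{m+1}-v_{m}$, and the existence of $d$ is equivalent to a coincidence $v_{m}=w_{n}$. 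Reducing these sequences modulo $c$, $4a$, $4b$ eliminates most pairs of classes, shows that $m$ and $n$ must be roughly equal, and identifies the solutions with $m,n\le 1$ as precisely $d=d_{-}$ and $d=d_{+}$.

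\emph{Then comes the linear form and the reduction.} A coincidence $v_{m}=w_{n}$ with $m$ large forces a nonzero linear form $\Lambda$ in three logarithms of algebraic numbers built from $\sqrt{ac+4}$, $\sqrt{bc+4}$ and the relevant fundamental units to be exponentially small in $m$; Matveev's theorem then bounds $\max(m,n)$ by an explicit quantity that is at worst a fixed power of $\log a$, and matching it against the trivial lower bound for $|\Lambda|$ coming from the recurrences yields an explicit upper bound for $m$ and $n$. A Baker--Davenport (continued-fraction) or LLL reduction, performed with $a$ retained as a parameter and the few genuinely small values of $a$ treated separately, then lowers the bound to $m,n\le 1$, i.e.\ $d=d_{\pm}$, for every admissible $a$ and every $c$ in the list of the first step, completing the proof.

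\emph{The main obstacle} is uniformity in $a$: both the Matveev bound and the reduction depend on $a$, so one must either extract a bound on $a$ itself — which the relation $b=ka$ makes plausible, since then $b$, each candidate $c$, and $d$ are all governed by $a$ alone — or carry out the reduction for an entire family of $a$ at once, controlling the numerology uniformly across the six values of $k$ and all $c$-candidates, and clearing the initial segment of small $a$ by hand. A subsidiary difficulty lies already in the first step: excluding medium-sized $c$ that are neither in the advertised polynomial family nor force regularity requires gap inequalities sharpened specifically for pairs of the shape $\{a,ka\}$.
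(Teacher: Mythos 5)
Your outline follows the same overall strategy as the paper (restrict $c$ to a finite family, pass to simultaneous Pellian equations, Matveev plus Baker--Davenport), but it leaves open precisely the two places where the paper has to do work specific to the shape $b=ka$ with $7\le k\le 13$. First, the restriction of $c$. The published classification of third elements $c=c_\nu^{\pm}$ of a $D(4)$-pair $\{a,b\}$ (via the fundamental solutions $(\pm 2,2)$ of $at^2-bs^2=4(a-b)$) is only known for $b\le 6.85a$, which covers $k\le 6$ but none of the values here. The paper's main new input (Lemma \ref{glavna nova lema}) extends this to $b\le 13.92a$ by a descent $s'=\frac{rs-at}{2}$, $c'=\frac{(s')^2-4}{a}$, $b'=d_-(a,b,c')$, which forces $b'c'<13.92$ and hence reduces to the four exceptional pairs $\{1,5\},\{3,4\},\{2,6\},\{1,12\}$; each of these must then be excluded (or, for $\{1,5,a,7a\}$ and $\{1,5,a,8a\}$, handled separately with $a=96$ and $a=12$) using the growth of $a_{\nu+1}^{\pm}/a_\nu^{\pm}$. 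You flag this as a ``subsidiary difficulty'' but do not supply an argument, so your ``explicit short list'' of candidates for $c$ is not actually established; without it the rest of the proof has nothing to run on.

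Second, the mechanism that bounds $a$. You correctly identify uniformity in $a$ as the main obstacle, but you only call a bound on $a$ ``plausible.'' Moreover, the step you propose would fail for the smallest candidates $c=c_1^{\pm}=a+b\pm 2r$: there $c\asymp a\asymp b$, so the classical congruences for $z=v_m=w_n$ modulo $c$ (or $c^2$) yield a lower bound of the form $m\gg \sqrt{c/b}$, which is bounded and gives no growth in $a$. The paper instead studies the intersection $x=Q_m=P_l$ coming from the two Pellian equations that both involve $a$, and works modulo $a^2$ (Lemmas \ref{lem_6} and \ref{lower-Type-a}); it is exactly the relation $b=ka\equiv 0\pmod a$, together with $c_1^{\pm}\equiv\pm 2r$ and $r^2\equiv 4\pmod a$, that collapses the congruence to $-16m^2\pm 8m\mp 8l\equiv 0\pmod{a'}$ with $a'\ge\sqrt a$, hence $m\gg a^{1/4}$. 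Matching this against the Matveev bound $m\ll\log^2 a$ bounds $a$, and since $a=a_p^{(k)}$ grows exponentially in $p$ only finitely many $p$ survive for the reduction step. This congruence lower bound is the missing idea in your plan; the remaining ingredients (the gap $n-1\le m\le 2n+1$, the reduction, and the separate treatment of $c_3^{\pm}$ via $z=v_m=w_n$ where $c$ is genuinely large) are as you describe.
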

The cases $k \in \{2,3,5,6\}$ have already been observed in \cite{glavni} and \cite{Filipin}, and it is easy to verify that pairs $\{a,4a\}$ and $\{a,9a\}$ cannot satisfy the $D(4)$ property.

Firstly, in Section $2$ we show that $D(4)$-pairs $\{a,ka\}$, $k\in \{7, 8, 10,$ $11,12,13\}$ can be extended to a $D(4)$-triple only with an element $c$, which is a member of a family of positive integers depending on $a$. Then in Sections $3-5$, we prove that these triples can only be extended to regular $D(4)$-quadruples $\{a, ka, c, d\}$.\ These examples support the Conjecture \ref{conj}.
\section{Extensions of pairs to triples}\label{section_2}

If $\{a,ka\}$ is a $D(4)$-pair, then there exists $r\in \NN$ such that
\begin{equation}\label{pair eqn}
ka^2+4=r^2.
\end{equation}
Rewriting (\ref{pair eqn}) as a Pellian equation, yields 
\begin{equation}\label{pair Pellian} r^2-ka^2=4.
\end{equation}
It is easy to verify that there is only one fundamental solution $(r_1,a_1)$ of (\ref{pair Pellian}), for any $k=7,8,10,11,12,13$, namely $(r_1,a_1)\in\{(16,6),(6,2),$
$(38,12),(20,6),$
$(4,1),(11,3)\}$, respectively. All solutions $(r_p,a_p)$ of the equation (\ref{pair Pellian}) are given by
\begin{align}\label{sequence-a_p}
\frac{r_p+a_p\sqrt{k}}{2}=\left(\frac{r_1+a_1\sqrt{k}}{2}\right)^p,\quad p\in\mathbb{N}.
\end{align}
From this relation, if we denote the $p$-th element of the sequence for each $k$ with $a_p^{(k)}$, we easily obtain the following relations:
\begin{align}
    a_p^{(7)}&=\dfrac{1}{\sqrt{7}}\left((8+3\sqrt{7})^p-(8-3\sqrt{7})^p \right),\label{a_p_1}\\
    a_p^{(8)}&=\dfrac{1}{\sqrt{8}}\left((3+\sqrt{8})^p-(3-\sqrt{8})^p \right),\label{a_p_2}\\
    a_p^{(10)}&=\dfrac{1}{\sqrt{10}}\left((19+6\sqrt{10})^p-(19-6\sqrt{10})^p \right),\label{a_p_3} 
    \end{align}
    \begin{align}
    a_p^{(11)}&=\dfrac{1}{\sqrt{11}}\left((10+3\sqrt{11})^p-(10-3\sqrt{11})^p \right),\label{a_p_4}\\
    a_p^{(12)}&=\dfrac{1}{\sqrt{12}}\left(\left(\frac{4+\sqrt{12}}{2}\right)^p-\left(\frac{4-\sqrt{12}}{2}\right)^p \right),\label{a_p_5}\\
    a_p^{(13)}&=\dfrac{1}{\sqrt{13}}\left(\left(\frac{11+3\sqrt{13}}{2}\right)^p-\left(\frac{11-3\sqrt{13}}{2}\right)^p \right).\label{a_p_6}
\end{align}

As we will present, many proofs of the lemmas in this article require the following fact, which is easy to verify. 
$$
\gcd(a_p,r_p)=  \begin{cases}
2,&\ k=7,8,10,11,\\
2,&\ k=12, p \equiv 0\ (\bmod 2),\\
1,&\ k=12, p \equiv 1\ (\bmod 2),\\
2,&\ k=13, p \equiv 0\ (\bmod 3),\\
1,&\ k=13, p \equiv 1,2\ (\bmod 3).

\end{cases}
$$
If we assume that an irregular $D(4)$-quadruple exists, from \cite{mbt} we know a numerical lower bound on element $b$:
\begin{lemma}\cite[Lemma 2.2]{mbt}\label{lower_on_b}
Let $\{a,b,c,d\}$ be a $D(4)$-quadruple such that $a<b<c<d_+<d$. Then $b>10^5$.
\end{lemma}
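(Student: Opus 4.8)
The plan is to argue by contradiction. Suppose $\{a,b,c,d\}$ is a $D(4)$-quadruple with $a<b<c<d_{+}<d$ and $b\le 10^{5}$. Since all of $ac+4$, $bc+4$, $ad+4$, $bd+4$, $cd+4$ are perfect squares, the values $c>b$ for which $\{a,b,c\}$ is a $D(4)$-triple are governed by the Pell-like equation obtained by eliminating $c$ from $ac+4=\square$ and $bc+4=\square$: its solution set is a finite union of binary recurrence sequences $(c^{(\nu)}_{m})_{m\ge 1}$ attached to $\{a,b\}$, with $c^{(\nu)}_{1}\asymp b$ and successive terms growing by a factor $\asymp ab$. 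As $\{a,b,c\}$ and $\{a,b,d\}$ are both $D(4)$-triples, $c$ and $d$ both lie in this finite union.

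Next I would invoke the gap principles for $D(4)$-triples and quadruples in the spirit of \cite{Filipin}: one has $d_{+}(a,b,c)\asymp abc$, and $d>d_{+}(a,b,c)$ forces $d$ to lie strictly beyond $d_{+}$ in the increasing list of $\{a,b\}$-admissible values, hence $d$ is at least the successor of $d_{+}$ and so $d\gg ab\cdot d_{+}\gg (ab)^{2}c$. This largeness of $d$ feeds into the usual machinery: writing $ad+4=x^{2}$ and $cd+4=z^{2}$ and expressing $x,z$ via the dominant roots of their recurrences yields a linear form $\Lambda$ in two (or three) logarithms which is exponentially small in the index $m$ of $d$; against the Baker--Matveev lower bound for $\Lambda$ this bounds $m$ in terms of $\log(ab)$, and since the gap principle also controls how far apart the indices of $c$ and $d$ can be, it bounds the index of $c$, hence $c$ itself, as a (quasi-)polynomial in $ab$. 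With $b\le 10^{5}$ this leaves only finitely many candidate triples $\{a,b,c\}$.

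The final and, I expect, hardest step is to eliminate this finite list. For each surviving $\{a,b,c\}$ the putative irregular $d$ corresponds to a concrete linear form in logarithms whose numerical coefficients are known; the Baker--Davenport reduction (continued fractions in the two-logarithm case, or LLL applied to the lattice spanned by the relevant convergents in general) then forces the admissible index of $d$ down into a range in which necessarily $d\le d_{+}(a,b,c)$, contradicting $d>d_{+}$. The real work is that this must be carried out uniformly over a large family, and one must check that the logarithmic and gap estimates are sharp enough for the reduction to terminate for every pair $\{a,b\}$ in range; as usual, the handful of very small pairs (small $a$, and the $D(4)$-pairs with tiny $r$) fall outside the generic estimates and have to be treated individually. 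Once all candidates are cleared, no irregular $D(4)$-quadruple with $b\le 10^{5}$ can exist, which is exactly the assertion $b>10^{5}$.
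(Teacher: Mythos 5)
The paper does not prove this statement at all: it is imported verbatim as \cite[Lemma 2.2]{mbt}, so there is no internal proof here to compare yours against. Judged on its own terms, your proposal correctly identifies the general shape of the argument used in that kind of result --- classify the admissible $c$ and $d$ through the binary recurrence sequences attached to the relevant Pellian equations, exploit a gap principle coming from $d>d_{+}$, play an upper bound from linear forms in logarithms against a lower bound on the indices, and finish with Baker--Davenport reduction over the finitely many pairs with $b\le 10^{5}$.

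However, what you have written is a plan, not a proof, and the gap is not merely one of polish. Every quantitative step is asserted with $\asymp$ or $\gg$ and none is justified: the claim that $d$ is ``at least the successor of $d_{+}$'' in a single list of $\{a,b\}$-admissible values, hence $d\gg (ab)^{2}c$, does not follow from what precedes it --- $d$ must be admissible simultaneously for the pairs $\{a,c\}$, $\{b,c\}$ and $\{a,b\}$, and the gap principles actually used in this literature are extracted from the initial-term analysis of the intersecting sequences (as in Theorem \ref{tm:inital_terms}) together with congruence conditions, not from a successor argument in one sequence. The Matveev bound is never instantiated, so no explicit bound on the index of $d$ (typically of size $10^{15}$--$10^{22}$) is obtained, and without it the reduction step cannot even be set up. Most importantly, the finite verification over all $D(4)$-pairs with $b\le 10^{5}$ --- which you yourself flag as the hardest step and which is the entire content of the lemma --- is simply deferred with the hope that the reduction ``terminates for every pair in range.'' Since the statement is precisely the outcome of that uniform computation, a proof that stops short of performing it has not established the lemma.
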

It is straightforward to use this bound to determine the lower bound for $a$ in an irregular $D(4)$-quadruple $\{a,ka,c,d \}$. This is presented in Table~\ref{tab:lowerbounds}.

\begin{table}[ht]
\caption{Lower bounds for the element $a$}
\label{tab:lowerbounds}       
\begin{tabular}{|c|c|c|c|c|c|c|}
\hline
        $k$ & $7$ & $8$ & $10$ & $11$ & $12$ & $13$ \\
        \hline
        $p$ & $4$ & $6$ & $3$ & $4$ & $8$ & $5$ \\
        \hline
        $a_p^{(k)}$ & $24384$ & $13860$ & $17316$ & $47760$ & $10864$ & $42837$ \\
        \hline
\end{tabular}
\end{table}
 Let $\{a,b\}$ be a $D(4)$-pair. Then there exists a positive integer $r$ such that $ab+4=r^2$. Extending this pair to a $D(4)$-triple with an element $c$ means finding $s,t \in \mathbb{N}$ such that $$
    ac+4=s^2,\quad bc+4=t^2.$$
These two equations yield a Pellian equation
\begin{equation}\label{eqn: pell for c}
    at^2-bs^2=4(a-b).
\end{equation}
Its solutions $(t,s)$ are given by
\begin{equation}
 (t_{\nu}\sqrt{a} + s_{\nu}\sqrt{b})
 = (t_0\sqrt{a} + s_0\sqrt{b})
 \left(\frac{r + \sqrt{ab}}{2}\right)^{\nu},
 \quad \nu \geq 0,
\label{eq:ts_solutions}
\end{equation}
where $(t_0,s_0)$ is a fundamental solution of the equation (\ref{eqn: pell for c}) and $\nu$ is a nonnegative integer.

It has been proved in \cite[Lemma 6.1]{mbt} that for $b\leq 6.85a$ the only fundamental solutions of this equation from which we obtain the third element $c$, which is an integer, are $(t_0,s_0)=(\pm2,2)$. 
From \cite{Filipin-LjB drugi} we know that all solutions $(t_{\nu},s_{\nu})$ generated with $(t_0,s_0)=(\pm2,2)$ can be represented by recursively defined sequences:
\begin{align}
    &t_0=\pm 2,\ t_1=b\pm r,\ t_{\nu+2}=rt_{\nu +1}-t_{\nu},\label{sequence_t_nu}\\
    &s_0= 2,\ s_1=r\pm a,\ s_{\nu+2}=rs_{\nu +1}-s_{\nu},\ \nu\geq 0.\label{sequence_s_nu}
\end{align}
Since $c=\frac{s^2-4}{a}$, an explicit expression for the third element $c$ in terms of $a$ and $b$ is given by
\begin{align}
c=c_{\nu}^{\pm}=\frac{4}{ab}&\left\{\left(\frac{\sqrt{b}\pm\sqrt{a}}{2}\right)^2\left(\frac{r+\sqrt{ab}}{2}\right)^{2\nu}\right. \label{eqn:c_nu_general_exp}\\
&\left.
+\left(\frac{\sqrt{b}\mp\sqrt{a}}{2}\right)^2\left(\frac{r-\sqrt{ab}}{2}\right)^{2\nu}-\frac{a+b}{2}\right\},\nonumber
\end{align}
where $\nu\geq 0$ is an integer. The first few elements of this sequence are:
\begin{align*}
c_1^{\pm}&=a+b\pm 2r,\\
  c_2^{\pm}&=(ab+4)(a+b\pm2r)\mp4r,\\
  c_3^{\pm}&=(a^2b^2+6ab+9)(a+b\pm2r)\mp4r(ab+3), \\
  c_4^{\pm}&=(a^3b^3+8a^2b^2+20ab+16)(a+b\pm2r)\mp4r(a^2b^2+5ab+6).
\end{align*}

\begin{lemma}\label{pomocna lema za lemu ispod}
   Let $\{a,b\}$ be a $D(4)$-pair. Then $d_+(a,b,c_{\nu}^{\pm})=c_{\nu+1}^{\pm}$ and $d_-(a,b,c_{\nu}^{\pm})=c_{\nu-1}^{\pm}$.
\end{lemma}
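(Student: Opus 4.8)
The plan is to prove the two identities directly from the closed-form expression \eqref{eqn:c_nu_general_exp} together with the definition of $d_{\pm}(a,b,c)$, exploiting the fact that $(ab+4)(ac_\nu^\pm+4)(bc_\nu^\pm+4)$ is a perfect square with an easily computable square root. Write $\alpha = \frac{r+\sqrt{ab}}{2}$ and $\beta = \frac{r-\sqrt{ab}}{2}$, so that $\alpha\beta = 1$ and $\alpha+\beta = r$, and abbreviate $u = \left(\frac{\sqrt b+\sqrt a}{2}\right)^2$, $v = \left(\frac{\sqrt b-\sqrt a}{2}\right)^2$ in the $c^+$ case (and swap $u,v$ in the $c^-$ case), noting $uv = \frac{(b-a)^2}{16}$ and $u+v = \frac{a+b}{2}$. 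Then $c_\nu^+ = \frac{4}{ab}(u\alpha^{2\nu} + v\beta^{2\nu} - \tfrac{a+b}{2})$. The first step is to compute $ac_\nu^\pm + 4$ and $bc_\nu^\pm + 4$ in this notation; using $ab+4 = r^2$ one checks that $ac_\nu^+ + 4 = s_\nu^2$ and $bc_\nu^+ + 4 = t_\nu^2$ with $s_\nu, t_\nu$ given by \eqref{sequence_s_nu}, \eqref{sequence_t_nu}, i.e. $2s_\nu = (\sqrt b + \sqrt a)\alpha^\nu \cdot \frac{1}{\sqrt{?}}$ — more precisely one gets clean expressions $s_\nu\sqrt a = \bigl(\tfrac{\sqrt b+\sqrt a}{2}\bigr)\alpha^\nu + \bigl(\tfrac{\sqrt b-\sqrt a}{2}\bigr)\beta^\nu$ (for the $+$ sign) from \eqref{eq:ts_solutions} with $(t_0,s_0)=(2,2)$, and similarly for $t_\nu$.

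The second step is to evaluate the square root appearing in $d_\pm$. Since $ab+4=r^2$, $ac_\nu^\pm+4 = s_\nu^2$, $bc_\nu^\pm+4 = t_\nu^2$, we have $\sqrt{(ab+4)(ac_\nu^\pm+4)(bc_\nu^\pm+4)} = |r\,s_\nu\,t_\nu|$. Now I would express $r s_\nu t_\nu$ in terms of $\alpha,\beta,u,v$: from the product of the three linear-in-$\sqrt a,\sqrt b$ forms one obtains, after multiplying out and using $\alpha\beta=1$, an expression of the shape $\frac{2}{ab}\bigl(\pm(\text{something})\bigr)$, and the key algebraic identity to verify is
\[
a + b + c_\nu^\pm + \tfrac12\bigl(ab\,c_\nu^\pm \pm r s_\nu t_\nu\bigr) = c_{\nu+1}^\pm,
\qquad
a + b + c_\nu^\pm + \tfrac12\bigl(ab\,c_\nu^\pm \mp r s_\nu t_\nu\bigr) = c_{\nu-1}^\pm,
\]
with the correct choice of signs. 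Substituting the closed forms, both sides become Laurent polynomials in $\alpha$ (using $\beta = \alpha^{-1}$) with coefficients rational in $a,b,r$, and the claimed identities reduce to matching these coefficients, which is a finite and purely mechanical check; alternatively one verifies them for $\nu = 1$ and $\nu = 2$ (using the explicit $c_1^\pm, c_2^\pm, c_3^\pm$ listed above) and then argues that both sides satisfy the same second-order linear recurrence $x_{\nu+2} = (r^2-2)x_{\nu+1} - x_\nu + \text{const}$ in $\nu$, so agreement on two consecutive indices propagates.

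A cleaner route, which I would actually adopt, is to avoid the square root entirely: it suffices to show that $\{a,b,c_\nu^\pm,c_{\nu+1}^\pm\}$ is a $D(4)$-quadruple and that $c_{\nu+1}^\pm > c_\nu^\pm$, together with the fact (immediate from the definition) that $d_+(a,b,c) \cdot d_-(a,b,c)$ and $d_+(a,b,c)+d_-(a,b,c)$ are symmetric functions with $d_+ + d_- = 2(a+b+c) + abc$ and $d_+ d_- = (a+b+c)^2 + \cdots$; since for fixed $\{a,b,c\}$ the equation defining the two possible extensions $d$ is quadratic, $c_{\nu-1}^\pm$ and $c_{\nu+1}^\pm$ are its only two roots, so one only needs to confirm $c_{\nu\pm1}^\pm$ are both extensions and are ordered correctly. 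Checking that $ac_{\nu+1}^\pm+4$, $bc_{\nu+1}^\pm+4$ and $c_\nu^\pm c_{\nu+1}^\pm + 4$ are squares follows from the recurrences \eqref{sequence_t_nu}–\eqref{sequence_s_nu} and a short induction; the ordering $c_{\nu-1}^\pm < c_\nu^\pm < c_{\nu+1}^\pm$ is clear since $\alpha > 1$. The main obstacle is purely bookkeeping: getting all the $\pm/\mp$ sign conventions in \eqref{eqn:c_nu_general_exp}, \eqref{sequence_t_nu}, \eqref{sequence_s_nu} to line up consistently, and handling the boundary case $\nu = 0$ (where $c_{-1}^\pm$ should be interpreted via the formula and may coincide with one of $a,b$ or be $0$, matching $d_- = 0$). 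No deep input is needed beyond the identities already recorded in the excerpt.
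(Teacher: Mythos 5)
Your preferred route (the ``cleaner'' second one) contains a genuine logical gap. You propose to conclude $c_{\nu+1}^{\pm}=d_+(a,b,c_{\nu}^{\pm})$ from the facts that $\{a,b,c_{\nu}^{\pm},c_{\nu+1}^{\pm}\}$ is a $D(4)$-quadruple and that ``the equation defining the two possible extensions $d$ is quadratic.'' But the set of $d$ for which $\{a,b,c,d\}$ is a $D(4)$-quadruple is not cut out by the quadratic $X^2-(d_++d_-)X+d_+d_-=0$; it is defined by the three conditions that $ad+4$, $bd+4$, $cd+4$ be perfect squares, and the assertion that every such $d$ is a root of that quadratic is exactly the regularity statement of Conjecture \ref{conj} --- the open conjecture whose special cases this paper is devoted to proving. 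So checking that $c_{\nu+1}^{\pm}$ extends the triple and exceeds $c_{\nu}^{\pm}$ does not identify it with $d_+$; the argument is circular. To salvage that route you would have to verify the two symmetric-function identities $c_{\nu+1}^{\pm}+c_{\nu-1}^{\pm}=2(a+b+c_{\nu}^{\pm})+abc_{\nu}^{\pm}$ and $c_{\nu+1}^{\pm}\,c_{\nu-1}^{\pm}=d_+(a,b,c_{\nu}^{\pm})\,d_-(a,b,c_{\nu}^{\pm})$ directly, at which point the ``shortcut'' has disappeared.

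Your first route --- proving the identity $a+b+c_{\nu}^{\pm}+\tfrac12\bigl(abc_{\nu}^{\pm}+rs_{\nu}t_{\nu}\bigr)=c_{\nu+1}^{\pm}$ head-on --- is the correct idea and would go through (modulo the sign of $s_{\nu}t_{\nu}$ and the $\nu=0$ boundary, which you rightly flag), but the Laurent-polynomial expansion is far more work than needed. The paper's proof is one line: from \eqref{eq:ts_solutions} one reads off $s_{\nu+1}=\tfrac{r}{2}s_{\nu}+\tfrac{a}{2}t_{\nu}$, and squaring this while using $r^2=ab+4$, $s_{\nu}^2=ac_{\nu}^{\pm}+4$, $t_{\nu}^2=bc_{\nu}^{\pm}+4$ gives
\[
c_{\nu+1}^{\pm}=\frac{s_{\nu+1}^2-4}{a}=a+b+c_{\nu}^{\pm}+\frac12\bigl(abc_{\nu}^{\pm}+rs_{\nu}t_{\nu}\bigr)=d_+(a,b,c_{\nu}^{\pm}),
\]
with the $d_-$ statement following by the same computation run backwards. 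I would rewrite your argument around that single recurrence rather than either of your two proposed routes.
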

\begin{proof}
    From (\ref{eq:ts_solutions}) we get $s_{\nu+1}=\frac{r}{2}s_{\nu}+\frac{a}{2}t_{\nu}$. Since $c_{\nu}^{\pm}=\frac{(s_{\nu})^2-4}{a}=\frac{(t_{\nu})^2-4}{b}$ it follows that $c_{\nu+1}^{\pm}=\frac{(s_{\nu+1})^2-4}{a}=a+b+c_{\nu}^{\pm}+\frac{1}{2}(abc_{\nu}^{\pm}+rs_{\nu}t_{\nu})=d_+(a,b,c_\nu^{\pm})$.
\end{proof}
The main goal of this section is to improve this upper bound for $b$ in terms of $a$ under an additional restriction.
\begin{lemma}\label{glavna nova lema}
    Let $\{a,b,c\}$ be a $D(4)$-triple and $a< b\leq 13.92a$. Suppose that $\{1,5,a,b\}$ is not a $D(4)$-quadruple. Then $c=c_{\nu}^{\pm}$ for some positive integer $\nu$.
\end{lemma}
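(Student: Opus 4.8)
The plan is to control the fundamental solutions $(t_0,s_0)$ of the Pellian equation \eqref{eqn: pell for c} for the extended range $a<b\leq 13.92a$, since the already-cited result \cite[Lemma 6.1]{mbt} only handles $b\leq 6.85a$. The standard approach (going back to the work of Filipin and others) is to bound a fundamental solution: if $(t_0,s_0)$ is a fundamental solution of \eqref{eqn: pell for c} in the appropriate sense, then one has inequalities of the shape $1\leq s_0\leq \sqrt{\tfrac{(b-a)}{?}\cdot\text{(something)}}$ and a companion bound on $t_0$, obtained by choosing $\nu$ so that the fundamental solution lies in a bounded window. Concretely, I would invoke the Nagell–type bounds $0<s_0\le \tfrac{r}{\sqrt{?}}$ etc., which in this regime force $s_0\in\{1,2\}$ and, via the congruence $s_0^2\equiv 4\pmod a$ (which comes from $ac+4=s^2$ applied to the fundamental level, or more precisely from $as_0 t_0 \equiv 0$ and the relations defining fundamental solutions), severely restrict the possibilities.

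Second, I would carry out the case analysis on these finitely many fundamental classes. The case $s_0=2$ gives $t_0=\pm 2$, which is exactly the family \eqref{sequence_t_nu}–\eqref{sequence_s_nu} producing $c=c_\nu^\pm$, so nothing more is needed there. The remaining exceptional classes (those with $s_0=1$, or small even $s_0$, and possibly $s_0 t_0$ with a fixed small value) must be shown to be impossible \emph{unless} $\{1,5,a,b\}$ is a $D(4)$-quadruple — this is precisely why that hypothesis appears in the statement. I expect that in each exceptional class one derives, from $at_0^2-bs_0^2=4(a-b)$ together with $b\le 13.92a$, that $a$ and $b$ are bounded by an absolute constant, or that $a\mid$ a fixed small quantity; combined with the lower bounds $a\ge a_p^{(k)}$ from Table~\ref{tab:lowerbounds} (which exceed $10^4$ in every case), this rules the class out — except for one sporadic solution which, when unwound, is exactly $\{1,5,a,b\}$. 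The bookkeeping linking "$s_0=1$ is possible" to "$\{1,5,a,b\}$ is a quadruple" is the content one has to nail down: $s_0^2-4 = -3$ being divisible by $a$ would force $a\mid 3$, hence $a=1$ or $a=3$, and then chasing which $b=ka$ makes $\{1,5,a,b\}$ a $D(4)$-set isolates $a=1$.

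The main obstacle is the middle range $6.85a < b \le 13.92a$: here one genuinely has to redo the fundamental-solution estimate rather than cite \cite{mbt}, and the Nagell bound alone may leave more than one candidate class (for instance $s_0$ could a priori be as large as $\lfloor\sqrt{13.92-1}\rfloor$ times a small factor), so the argument must sharpen the bound using that $b$ is close to a multiple of $a$ — not just $b<13.92a$ but $b=ka$ for our specific $k$'s — or else push each surviving class against the large lower bound on $a$. A secondary technical point is ensuring the recursion \eqref{eqn:c_nu_general_exp} indeed exhausts \emph{all} integer values of $c$ arising from the class $(t_0,s_0)=(\pm2,2)$ over \emph{both} sign choices and all $\nu\ge 0$, so that "$c=c_\nu^\pm$ for some positive integer $\nu$" is the correct and complete conclusion; this is essentially \cite{Filipin-LjB drugi} combined with Lemma~\ref{pomocna lema za lemu ispod}, which shows the two families interleave as regular extensions. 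Once the exceptional classes are eliminated, the lemma follows immediately.
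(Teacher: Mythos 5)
Your proposal gestures at the right toolbox (descent on fundamental solutions of $at^2-bs^2=4(a-b)$) but misses the two ideas that actually make the lemma work, and the concrete details you do supply are wrong. First, the reduction to finitely many exceptional cases is not achieved by a Nagell-type bound on $(t_0,s_0)$ alone — as you yourself suspect, such a bound grows with $a$ and $b$ and leaves unboundedly many classes. The paper performs a \emph{double} descent: from $(t,s)$ it passes to $s'=\frac{rs-at}{2}$, $t'=\frac{rt-bs}{2}$, $c'=\frac{(s')^2-4}{a}$, and in the surviving case $0<c'<b$ it descends once more to $b'=d_-(a,b,c')$. The key observation is $b'=d_-(a,b,c')<\frac{b}{ac'}\le\frac{13.92}{c'}$, hence $b'c'<13.92$; together with $b'c'+4$ being a perfect square this leaves exactly the pairs $\{b',c'\}\in\bigl\{\{1,5\},\{3,4\},\{2,6\},\{1,12\}\bigr\}$. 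Your proposal never produces this finite list, and the promised ``sharpening using $b=ka$'' is not available: the lemma is stated for arbitrary $b\le 13.92a$.

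Second, your mechanism for killing the exceptional classes is incorrect. You claim they force $a,b$ below an absolute constant or force $a$ to divide a small number, and you invoke Table~\ref{tab:lowerbounds} — but that table presupposes an irregular quadruple with $b=ka$, neither of which is a hypothesis here, and in any case $a,b$ are \emph{not} bounded in the exceptional classes: $\{3,4,a,b\}$ has infinitely many candidate solutions. What actually happens is that if $\{b',c',a,b\}$ is a $D(4)$-quadruple then $a=a_\nu^\pm$ and $b=d_+(b',c',a)=a_{\nu+1}^\pm$, and the ratio $b/a$ decreases to $\left(\frac{4+\sqrt{12}}{2}\right)^2=(2+\sqrt3)^2\approx 13.93>13.92$ for each of $\{3,4\},\{2,6\},\{1,12\}$, contradicting $b\le 13.92a$ — this is precisely why the constant $13.92$ appears in the statement. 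For $\{1,5\}$ the analogous limit is $\left(\frac{3+\sqrt5}{2}\right)^2\approx 6.85<13.92$, so that pair cannot be eliminated this way; that is the true role of the hypothesis that $\{1,5,a,b\}$ is not a $D(4)$-quadruple. Your account of it — that $s_0=1$ gives $a\mid 3$ and ``isolates $a=1$'' — is a non sequitur: in the actual proof $a$ and $b$ are the two \emph{large} elements extending the small pair $\{1,5\}$, not small elements of it.
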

\begin{proof}
    We follow the idea of \cite[Lemma 6.1]{mbt} and \cite[Lemma 1]{Filipin-LjB drugi}.
    Define $s'=\frac{rs-at}{2}, t'=\frac{rt-bs}{2}$ and $c'=\frac{{(s')}^2-4}{a}$. The cases $c'>b, c'=b$ and $c'=0$ are the same as in \cite[Lemma 1]{Filipin-LjB drugi} and yield $c=c_{\nu}^{\pm}.$ It is only left to consider the case $0<c'<b$. Here we define $r'=\frac{s'r-at'}{2}$ and $b'=\frac{(r')^2-4}{a}$. If $b'=0$ then it can be shown that $c'=c_1^-$ and $c=c_{\nu}^-$ for some positive integer $\nu$. Notice that $b'=d_-(a,b,c')$, hence, $$ b'<\frac{b}{ac'}\leq\frac{13.92a}{ac'}=\frac{13.92}{c'} \implies b'c'<13.92.$$
    Since $b'>0$ and $b'c'+4$ is a square, we consider the following cases for $b'$, depending on the term $c'$:
\begin{table}[h!]

    \centering
    \begin{tabular}{|c|c|c|c|c|c|c|c|c|}
    \hline
         $c'$&  $1$&$2$&$3$&$4$&$5$&$6$&$12$\\
         \hline
         $b'$& $5,12$& $6$& $4$& $3$& $1$& $2$ &$1$\\
         \hline
    \end{tabular}
    
\end{table}
\\Cases $c' \in \{7,8,9,10,11 \}$ and $c'\geq13$ imply there are no $b'>0$ that satisfy our conditions.
So, we obtain that $a$ and $b$ extend pairs $\{1,5\}, \{3,4\}, \{2,6\} ,\{1,12\}$. As our assumption is that $\{1,5,a,b\}$ is not a $D(4)$-quadruple, we only need to show that the three remaining cases also cannot be $D(4)$-quadruples when $b\leq 13.92a$. Let's suppose that $\{3,4,a,b\}$ is a $D(4)$-quadruple. From \cite[Lemma 6.1]{mbt}, it follows that 
\footnotesize
$$ a=a_{\nu}^{\pm}
=\frac{1}{3}\left\{\left(\frac{2\pm\sqrt{3}}{2}\right)^2\left(\frac{4+\sqrt{12}}{2}\right)^{2\nu} 
+ \left(\frac{\sqrt{2}\mp\sqrt{3}}{2}\right)^2\left(\frac{4-\sqrt{12}}{2}\right)^{2\nu}-\frac{7}{2} \right\},
$$
\normalsize
and from Lemma \ref{lower_on_b} it follows that $b=d_+(3,4,a)$. Lemma \ref{pomocna lema za lemu ispod} implies $d_+(3,4,a)=a_{\nu+1}^{\pm}$ for the same choice of $\pm$. Define $k := \frac{b}{a}=\frac{a_{\nu+1}^{\pm}}{a_{\nu}^{\pm}}$. It is easy to see that $k \leq 15.24$ and that it is decreasing as $\nu$ increases, and $$\lim_{\nu \rightarrow\infty} \frac{a_{\nu+1}^{\pm}}{a_{\nu}^{\pm}} = \left( \frac{4+\sqrt{12}}{2} \right)^2 > 13.92,$$, which gives us a contradiction to the assumption that $b \leq 13.92a$.
We use the same approach with two other cases and arrive at the same conclusion, the only difference being in case $\{1,12,a,b  \}$ because here we cannot use \cite[Lemma 6.1]{mbt} to show that $a=a_{\nu}^{\pm}$. Instead, we use bounds on the fundamental solutions of the corresponding Pellian equation from \cite[Theorem 10.21]{knjiga} .
\end{proof}

\begin{cor}\label{nova lema}
Let $\{a,ka,c\}$ be a $D(4)$-triple, $k \in \{10,11,12,13\}$. Then $c=c_{\nu}^{\pm}$.
\end{cor}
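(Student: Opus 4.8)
The plan is to derive Corollary~\ref{nova lema} directly from Lemma~\ref{glavna nova lema} applied with $b=ka$. For $k\in\{10,11,12,13\}$ one has $a<ka\le 13a<13.92a$, so the numerical hypothesis of Lemma~\ref{glavna nova lema} is automatic, and everything reduces to checking that $\{1,5,a,ka\}$ is \emph{not} a $D(4)$-quadruple for these $k$. It is worth recording why the restriction on $k$ is exactly this: for $k=7$ and $k=8$ the sets $\{1,5,96,672\}$ and $\{1,5,12,96\}$ are genuine $D(4)$-quadruples, with ratio of largest-to-third element equal to $7$ and to $8$, so the hypothesis of Lemma~\ref{glavna nova lema} really does fail there and those two values must be treated separately; $k=9$ is excluded because $\{a,9a\}$ is never a $D(4)$-pair.

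To rule out $\{1,5,a,ka\}$ I would argue by contradiction, copying the scheme used for the pairs $\{3,4\}$, $\{2,6\}$, $\{1,12\}$ inside the proof of Lemma~\ref{glavna nova lema}. Suppose $\{1,5,a,ka\}$ is a $D(4)$-quadruple; note $\{1,5\}$ is a $D(4)$-pair with $r=3$ (since $1\cdot5+4=9$). As $5\le 6.85\cdot 1$, \cite[Lemma 6.1]{mbt} applies to $\{1,5\}$: the third element $a$ of $\{1,5,a\}$ must have the form $a=c_\nu^{\pm}$, where throughout this paragraph $c_\nu^{\pm}$ denotes the sequence \eqref{eqn:c_nu_general_exp} attached to the pair $\{1,5\}$ (so with $r=3$). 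In particular $a\ge c_1^{+}=12>5$, so the quadruple is ordered $1<5<a<ka$. Exactly as in the $\{3,4,a,b\}$ case, Lemma~\ref{lower_on_b} forces this quadruple to be regular — otherwise $5>10^5$ — so $ka=d_+(1,5,a)$, and Lemma~\ref{pomocna lema za lemu ispod} rewrites this as $ka=c_{\nu+1}^{\pm}$. Hence $k=c_{\nu+1}^{\pm}/c_\nu^{\pm}$.

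Finally I would bound this quotient. From \eqref{eqn:c_nu_general_exp} one has $c_\nu^{\pm}=A\rho^{2\nu}+B\rho^{-2\nu}+C$ with $\rho=\tfrac{3+\sqrt5}{2}$, $A,B>0$ and $C<0$; hence, exactly as in the computation for $\{3,4,a,b\}$, the quotient $c_{\nu+1}^{\pm}/c_\nu^{\pm}$ is decreasing in $\nu$ and tends to $\rho^{2}=\tfrac{7+3\sqrt5}{2}<7$, while for $\nu\ge 1$ its largest value is $c_2^{+}/c_1^{+}=96/12=8$ (for the minus sign $c_1^{-}=0$, which forces $\nu\ge 2$, after which the positive values coincide with those of the plus sequence). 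Therefore $k\le 8<10$, contradicting $k\in\{10,11,12,13\}$. This shows $\{1,5,a,ka\}$ is not a $D(4)$-quadruple, so Lemma~\ref{glavna nova lema} applies and yields $c=c_\nu^{\pm}$.

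No step here is genuinely hard; the points requiring some care are: (i) \cite[Lemma 6.1]{mbt} is invoked for the auxiliary pair $\{1,5\}$, whose ratio $5/1$ lies below the $6.85$ threshold, not for $\{a,ka\}$; (ii) the passage ``not regular $\Rightarrow$ contradiction'', which rests on the fact that a $D(4)$-quadruple $\{p,q,r,s\}$ with $p<q<r<s$ and $s\neq d_+(p,q,r)$ forces $q>10^5$, together with the observation that $ka$, being another positive third element of $\{1,5\}$ that exceeds $a$, cannot lie strictly below $d_+(1,5,a)$; and (iii) the elementary monotonicity-and-limit estimate for $c_{\nu+1}^{\pm}/c_\nu^{\pm}$, which is identical in spirit to the one already performed for $\{3,4,a,b\}$. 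I expect (ii) to be the only spot where one has to be marginally more careful than in the analogous argument for $\{3,4\}$.
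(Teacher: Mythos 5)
Your proposal is correct and follows essentially the same route as the paper: reduce to showing $\{1,5,a,ka\}$ is not a $D(4)$-quadruple, then combine \cite[Lemma 6.1]{mbt}, Lemma~\ref{lower_on_b} and Lemma~\ref{pomocna lema za lemu ispod} to force $ka=d_+(1,5,a)$ and deduce $k\le 8$. The only cosmetic difference is that the paper obtains $k\le 8$ by dividing the explicit formula $ka=d_+(1,5,a)$ by $a$ and using $a\ge 12$, whereas you phrase the same bound as a monotonicity estimate on the quotient $c_{\nu+1}^{\pm}/c_{\nu}^{\pm}$; your treatment of the regularity step is in fact slightly more explicit than the paper's.
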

\begin{proof}
Let's show that $\{1,5,a,ka \}$ is not a $D(4)$-quadruple for $k \in \{10,11,12,13\}$. If $\{1,5,a,ka \}$ is a $D(4)$-quadruple, by using \cite[Lemma 6.1]{mbt}, Lemma \ref{lower_on_b}, and Lemma \ref{pomocna lema za lemu ispod}, we obtain $a=a_{\nu}^{\pm}$, $ka=d_+(1,5,a)$, and $d_+(1,5,a)=a_{\nu+1}^{\pm}$ for the same choice of $\pm$. We divide both sides of \begin{equation}\label{pomocna}
    ka = d_+(1,5,a)=1+5+a+\frac{1}{2}(5a+3\sqrt{5a+4}\sqrt{a+4})
\end{equation} by $a$ and, using the fact that $a \geq 12$, we obtain $k \leq 8$.
\end{proof}

\begin{lemma} \hfill
    \begin{enumerate}[i)]
    \item If $\{1,5,a,7a\}$ is a $D(4)$-quadruple, then $a=96$.
    \item If $\{1,5,a,8a\}$ is a $D(4)$-quadruple, then $a=12$.
\end{enumerate}
\end{lemma}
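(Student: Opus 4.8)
The plan is to reduce to exactly the situation handled in the proof of Corollary~\ref{nova lema}. Suppose $\{1,5,a,ka\}$ is a $D(4)$-quadruple with $k\in\{7,8\}$; here $\{1,5\}$ is a $D(4)$-pair with $r=3$. Since $5\le 6.85\cdot 1$, \cite[Lemma 6.1]{mbt} applies to the triple $\{1,5,a\}$ and gives $a=a_{\nu}^{\pm}$, where $a_{\nu}^{\pm}$ denotes the sequence $(\ref{eqn:c_nu_general_exp})$ attached to the pair $\{1,5\}$; in particular $a\ge 12>5$. As in Corollary~\ref{nova lema}, Lemma~\ref{lower_on_b} forbids an irregular quadruple whose second smallest element is only $5$, so $\{1,5,a,ka\}$ is regular and $ka=d_+(1,5,a)$, whence Lemma~\ref{pomocna lema za lemu ispod} forces $ka=a_{\nu+1}^{\pm}$ with the same sign. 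Since $a_1^-=0$ is excluded and $a_\nu^-=a_{\nu-1}^+$ for $\nu\ge2$, after relabelling the index we may assume the $+$ branch, so that $a=a_\nu^+$ and $k=a_{\nu+1}^+/a_\nu^+$ for some positive integer $\nu$.

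Next I would make this ratio explicit. Specializing $(\ref{eqn:c_nu_general_exp})$ to $(a,b,r)=(1,5,3)$ and writing $\alpha=\frac{3+\sqrt5}{2}$ — so that $\left(\frac{\sqrt5+1}{2}\right)^2=\alpha$ and $\frac{3\pm\sqrt5}{2}=\alpha^{\pm1}$ — one gets the closed form $a_\nu^+=\frac45\left(\alpha^{2\nu+1}+\alpha^{-(2\nu+1)}-3\right)=\frac45\,(L_{2\nu+1}-3)$, where $L_m=\alpha^m+\alpha^{-m}$ satisfies $L_0=2$, $L_1=3$, $L_m=3L_{m-1}-L_{m-2}$ (so $L_2=7$, etc.). The first terms are $a_1^+=12$, $a_2^+=96$, $a_3^+=672$, $a_4^+=4620,\dots$, so the first values of $k=a_{\nu+1}^+/a_\nu^+$ are $8$, $7$, $\tfrac{55}{8},\dots$.

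It then remains to show that $g(\nu):=a_{\nu+1}^+/a_\nu^+=\dfrac{L_{2\nu+3}-3}{L_{2\nu+1}-3}$ is strictly decreasing in $\nu\ge1$, with $\lim_{\nu\to\infty}g(\nu)=\alpha^2=\frac{7+3\sqrt5}{2}<7$. The limit is immediate from the closed form, and monotonicity reduces to a short Lucas-type computation: using $L_iL_j=L_{i+j}+L_{|i-j|}$ together with $L_{2\nu+5}+L_{2\nu+1}=L_2L_{2\nu+3}=7L_{2\nu+3}$ one obtains
\[
(L_{2\nu+3}-3)^2-(L_{2\nu+5}-3)(L_{2\nu+1}-3)=15L_{2\nu+3}-45>0\qquad(\nu\ge1),
\]
which, all the quantities involved being positive, is precisely $g(\nu)>g(\nu+1)$. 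Hence $g(\nu)\le g(3)=\tfrac{55}{8}<7$ for every $\nu\ge3$, which is incompatible with $k\in\{7,8\}$, so $\nu\in\{1,2\}$. Since $g(1)=8$ and $g(2)=7$, the case $k=8$ forces $\nu=1$, i.e. $a=a_1^+=12$, and the case $k=7$ forces $\nu=2$, i.e. $a=a_2^+=96$. (Conversely, $\{1,5,12,96\}$ and $\{1,5,96,672\}$ are easily checked to be $D(4)$-quadruples, so both values genuinely occur.)

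There is no genuinely hard step: once the first reduction is in place, everything is the same elementary monotone-sequence bookkeeping already used in Lemma~\ref{glavna nova lema} and Corollary~\ref{nova lema}. The only point requiring a little care is that first reduction — justifying that the fourth element is not merely \emph{some} term $a_\mu^{\pm}$ but precisely the \emph{next} one, $a_{\nu+1}^{\pm}=d_+(1,5,a)$. If one prefers to avoid invoking regularity there, one can instead observe that the sub-triple $\{1,5,ka\}$ already forces $ka=a_\mu^{\pm}$ by \cite[Lemma 6.1]{mbt}; since each consecutive ratio $g(j)$ exceeds $\alpha^2>6.85$, any index gap $\mu-\nu\ge2$ would give $k>\alpha^4>8$, a contradiction, so $\mu=\nu+1$ in any case, and the rest of the argument goes through verbatim.
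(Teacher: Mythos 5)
Your argument is correct, and the reduction at its core is the same one the paper uses: via \cite[Lemma 6.1]{mbt}, Lemma~\ref{lower_on_b} and Lemma~\ref{pomocna lema za lemu ispod} you land on $a=a_\nu^{\pm}$ and $ka=d_+(1,5,a)=a_{\nu+1}^{\pm}$, exactly as in the proof of Corollary~\ref{nova lema}. Where you diverge is the finishing step. The paper simply solves the radical equation \eqref{pomocna} for $a$: moving terms and squaring turns $ka=6+a+\tfrac12\bigl(5a+3\sqrt{(5a+4)(a+4)}\bigr)$ into a quadratic whose only positive root is $a=96$ for $k=7$ and $a=12$ for $k=8$ --- a one-line computation. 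You instead make the sequence $a_\nu^+=\tfrac45(L_{2\nu+1}-3)$ explicit and show the consecutive ratio $g(\nu)$ decreases strictly to $\alpha^2=\tfrac{7+3\sqrt5}{2}<7$, so only $g(1)=8$ and $g(2)=7$ can occur; your Lucas-type identity $(L_{2\nu+3}-3)^2-(L_{2\nu+5}-3)(L_{2\nu+1}-3)=15L_{2\nu+3}-45>0$ checks out, as do the values $12,96,672,4620$ and the relation $a_\nu^-=a_{\nu-1}^+$. Your route is the same ratio-monotonicity bookkeeping the paper deploys for the pairs $\{3,4\}$, $\{2,6\}$, $\{1,12\}$ inside Lemma~\ref{glavna nova lema}, so it is stylistically consistent, and it buys two small extras: it rules out every other integer ratio $k$ in one stroke (recovering Corollary~\ref{nova lema} for this pair), and your closing remark gives a clean way to bypass the regularity step by bounding the index gap with $\alpha^4>8$. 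The paper's direct solution of \eqref{pomocna} is shorter but yields only the two stated cases.
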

\begin{proof}
    From (\ref{pomocna}), when $k=7$ we get $a=96$ and when $k=8$ we get $a=12$.
\end{proof}
Using the theory of Pellian equations, it is easy to see that the only fundamental solutions for the corresponding Pellian equation when extending $\{96,672\}$ are $(\pm2,2)$ and $(\pm26,10)$, and when extending $\{12,96\}$ are $(\pm2,2)$ and $(\pm10,4)$. So we have two pairs of sequences for each of these pairs that extend them to  triples. Since the second element in both of these pairs is less than $10^5$, by Lemma \ref{lower_on_b} we know that all triples from those sequences extend only to regular quadruples.

Lemma \ref{glavna nova lema} allows us to further investigate the regularity of $D(4)$-quadrup\-les $\{a,ka,c,d\}$, i.e., we enhance \cite[Theorem 1.4]{glavni}.
In the following Sections, we show that $D(4)$-triples $\{a,ka,c \},\ k\in\{7,8,10,$
$11,12,13\}$, extend only to regular $D(4)$-quadruples.
Since we separately observed the exceptions, it remains to observe extensions of triples of the form $\{a,ka,c\}$, where $c=c_{\nu}^{\pm}$.
It is easy to see that $c_4^->a^3b^3$ and since $ka>10^5, \ k\in \{7,8,10,11,12,13\}$, we get from \cite[Theorem 1.6]{mbt} that  $c \in \{c_1^{\pm},c_2^{\pm},c_3^{\pm}\}$. Also, it is easy to see that inequalities $a<c_1^-<b$ and $c_1^+,c_2^{\pm},c_3^{\pm}>b$ hold in all our cases.

\section{Extensions of triples and linear forms in three logarithms}\label{section_3}

In this section, we observe a system of Pellian equations which corresponds to the extension of a $D(4)$-triple to a $D(4)$-quadruple. Then, we search for the intersection of linear recurrent sequences that describe solutions to these equations. To help us find these intersections, at the end of this section we use the theory of linear forms in logarithms to obtain some useful lemmas and results. Proofs that differ only in calculations from \cite{glavni} and \cite{ahpt} will be omitted.

\subsection{System of simultaneous Pellian equations}\label{subsec-3.1}

Let us observe an extension of a $D(4)$-triple $\{a,b,c\}$ to a $D(4)$-quadruple $\{a,b,c,d\}$. We need to find $x,y,z \in \mathbb{N}$ such that
\[
    ad+4=x^2,\quad
    bd+4=y^2,\quad
    cd+4=z^2.
\]
By eliminating $d$ from these equations, we obtain a system of generalized Pellian equations
\begin{align}
az^2-cx^2&=4(a-c),\label{eqn:pell_ac}\\
bz^2-cy^2&=4(b-c),\label{eqn:pell_bc}\\
ay^2-bx^2&=4(a-b).\label{eqn:pell_ab}
\end{align}
Its solutions $(z,x)$, $(z,y)$, and $(y,x)$ satisfy
\begin{align}
    z\sqrt{a}+x\sqrt{c}&=(z_0\sqrt{a}+x_0\sqrt{c})\left(\frac{s+\sqrt{ac}}{2}\right)^m,\label{eqn:sol_pell_ac}\\
    z\sqrt{b}+y\sqrt{c}&=(z_1\sqrt{a}+y_1\sqrt{c})\left(\frac{t+\sqrt{bc}}{2}\right)^n,\label{eqn:sol_pell_bc}\\
    y\sqrt{a}+x\sqrt{b}&=(y_2\sqrt{a}+x_2\sqrt{b})\left(\frac{r+\sqrt{ab}}{2}\right)^l,\label{eqn:sol_pell_ab}
\end{align}
where $m,n,l$ are nonnegative integers and $(z_0,x_0)$, $(z_1,y_1)$, and $(y_2,x_2)$ are fundamental solutions of (\ref{eqn:pell_ac})--(\ref{eqn:pell_ab}).

Firstly, we observe the solutions of the system of equations (\ref{eqn:sol_pell_ac}) and (\ref{eqn:sol_pell_bc}) and determine the intersections $z=v_m=w_n$ of sequences $(v_m)_m$ and $(w_n)_n$ defined by
\begin{align*}
&v_0=z_0,\ v_1=\frac{1}{2}\left(sz_0+cx_0\right),\ v_{m+2}=sv_{m+1}-v_{m},\\
&w_0=z_1,\ w_1=\frac{1}{2}\left(tz_1+cy_1 \right),\ w_{n+2}=tw_{n+1}-w_n.
\end{align*}
The initial terms of these sequences are described in the following theorem.
\begin{theorem}\cite[Theorem 1.3]{mbt}\label{tm:inital_terms}
Suppose that $\{a,b,c,d\}$ is a $D(4)$-quadruple with $a<b<c<d$ and that  $w_m$ and  $v_n$  are defined as before. 
\begin{enumerate}
\item[i)] If  the equation $v_{2m}=w_{2n}$ has a solution, then $z_0=z_1$ and $|z_0|=2$ or $|z_0|=\frac{1}{2}(cr-st)$.
\item[ii)] If the equation $v_{2m+1}=w_{2n}$ has a solution, then $|z_0|=t$, $|z_1|=\frac{1}{2}(cr-st)$ and $z_0z_1<0$.
\item[iii)] If the equation $v_{2m}=w_{2n+1}$ has a solution, then $|z_1|=s$, $|z_0|=\frac{1}{2}(cr-st)$ and $z_0z_1<0$.
\item[iv)] If the equation $v_{2m+1}=w_{2n+1}$ has a solution, then $|z_0|=t$, $|z_1|=s$ and $z_0z_1>0$.
\end{enumerate}
Moreover, if $d>d_+$, case $ii)$ cannot occur.
\end{theorem}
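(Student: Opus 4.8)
The plan is to combine three ingredients: the classical upper bounds for the fundamental solutions of generalized Pellian equations, an explicit description of the solution classes of (\ref{eqn:pell_ac}) and (\ref{eqn:pell_bc}), and a congruence analysis of the recurrent sequences $(v_m)$ and $(w_n)$ modulo $c$.

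First I would control the fundamental solutions. Since $4(a-c)<0$ and $4(b-c)<0$, Nagell's bounds for the fundamental solutions of generalized Pellian equations (see e.g.\ \cite{knjiga}) applied to (\ref{eqn:pell_ac}), whose Pellian resolvent $p^{2}-acq^{2}=4$ has fundamental solution $(s,1)$, give $|z_0|\le\sqrt{(c-a)(s-2)/(2a)}$; as $a<c$ and $ac$ is huge for the triples in question this forces $|z_0|<c/2$, and analogously $|z_1|<c/2$ for (\ref{eqn:pell_bc}). Next I would list the solution classes. Besides the ``trivial'' solution $(z,x)=(\pm2,\pm2)$, equation (\ref{eqn:pell_ac}) has the solution $(z,x)=(\pm t,\pm r)$ coming from the triple itself (indeed $at^{2}-cr^{2}=a(bc+4)-c(ab+4)=4(a-c)$), and the candidate fourth elements $d_{\pm}=d_{\pm}(a,b,c)$ give the solution with $z=\tfrac12|cr-st|$, since $\bigl(\tfrac12(cr-st)\bigr)^{2}=cd_-+4$ by a direct identity. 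Multiplying $t\sqrt a+r\sqrt c$ by $(s-\sqrt{ac})/2$ shows this last solution is equivalent to $(t,r)$, so $(t,r)$ is the fundamental element of its class precisely when $t\le\tfrac12|cr-st|$ (roughly, for the smaller values of $c$) and otherwise the element with first coordinate $\tfrac12|cr-st|$ is. Using the size bounds one shows these are all the classes, so the admissible fundamental first coordinates are $|z_0|\in\{2,\,t,\,\tfrac12|cr-st|\}$; the analogous analysis of (\ref{eqn:pell_bc}), whose ``triple'' solution is $(z,y)=(\pm s,\pm r)$, yields $|z_1|\in\{2,\,s,\,\tfrac12|cr-st|\}$.

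The core of the argument is the congruence analysis. From $s^{2}=ac+4\equiv4\pmod c$, an induction on $v_{m+2}=sv_{m+1}-v_m$ gives $v_{2m}\equiv z_0$ and $v_{2m+1}\equiv v_1\pmod c$, where $2v_1=sz_0+cx_0\equiv sz_0\pmod c$; likewise $w_{2n}\equiv z_1$ and $w_{2n+1}\equiv w_1\pmod c$ with $2w_1\equiv tz_1\pmod c$ (the factors of $2$ are dealt with by tracking the parities of $c$, $x_0$, $y_1$, passing to $2c$ where necessary). Inserting $v_m=w_n$ into these congruences in the four parity cases and using $|z_0|,|z_1|<c/2$ together with the finite lists above pins down the conclusions. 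In case i) one gets $z_0\equiv z_1\pmod c$, hence $z_0=\pm z_1$ and $|z_0|=|z_1|$; comparing the two lists of admissible values eliminates $|z_0|=t$ and $|z_1|=s$, leaving $|z_0|\in\{2,\tfrac12|cr-st|\}$, and matching the signs of the initial terms of the two sequences gives $z_0=z_1$. In case ii) one gets $2z_1\equiv sz_0\pmod c$; writing $sz_0=\pm st\equiv\mp(cr-st)\pmod c$ and testing the finitely many pairs $(|z_0|,|z_1|)$ against the congruence and the size bounds leaves only $|z_0|=t$, $|z_1|=\tfrac12|cr-st|$, $z_0z_1<0$; case iii) is symmetric under $s\leftrightarrow t$, $z_0\leftrightarrow z_1$, and case iv) follows the same template starting from $sz_0\equiv tz_1\pmod c$, giving $|z_0|=t$, $|z_1|=s$, $z_0z_1>0$.

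For the final assertion, the plan is to note that in case ii) \emph{both} fundamental solutions lie in the class attached to $d_{\pm}(a,b,c)$ (the $(t,r)$-class for (\ref{eqn:pell_ac}) and the $\tfrac12|cr-st|$-class for (\ref{eqn:pell_bc})); tracking which elements the solution chains based at those fundamental solutions produce under the even/odd index pattern of case ii) shows they yield only members of the regular chain through $\{a,b,c\}$, i.e.\ $d\in\{d_-,d_+,\dots\}$, which is incompatible with $d>d_+$. The step I expect to be the main obstacle is the exhaustive determination of the solution classes of the two Pellian equations: one must rule out any further class, which requires the Nagell-type bounds to be sharp enough and a somewhat delicate case analysis — in particular when $c=c_1^{\pm}$, where $\tfrac12|cr-st|$ is small and the candidate values can collide (for $c=c_1^{+}$ one in fact has $d_-=0$ and $\tfrac12|cr-st|=2$, so two of the classes coincide) — while the congruence bookkeeping, routine in spirit, still needs care with the parity of $c$.
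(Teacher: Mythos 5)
This theorem is imported verbatim from \cite[Theorem 1.3]{mbt}; the paper you are reading gives no proof of it, so there is nothing internal to compare against and your attempt has to be measured against the proof in that reference (which in turn follows the Dujella--Fujita--Filipin line, e.g.\ \cite{Filipin-2008, Filipin-2009}). For the four enumerated cases your sketch is essentially the standard argument and is structurally sound: bound the fundamental solutions of \eqref{eqn:pell_ac} and \eqref{eqn:pell_bc}, observe that $v_{2m}\equiv z_0$, $v_{2m+1}\equiv\tfrac12(sz_0+cx_0)$, $w_{2n}\equiv z_1$, $w_{2n+1}\equiv\tfrac12(tz_1+cy_1)$ modulo $2c$, and combine the resulting congruences with the size bounds and the identities $at^2-cr^2=4(a-c)$, $\left(\tfrac12(cr-st)\right)^2=cd_-+4$ to pin down $|z_0|\in\{2,t,\tfrac12|cr-st|\}$ and $|z_1|\in\{2,s,\tfrac12|cr-st|\}$ together with the sign conditions. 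Two caveats: the blanket claim $|z_0|<c/2$ is too glib (whether $t$ itself can be a fundamental first coordinate depends on the size of $c$ relative to $b$, which is exactly why the statement splits into parity cases and why for $c=c_1^{\pm}$ the odd cases degenerate), and the bookkeeping modulo $2c$ rather than $c$ is what actually forces $z_0=z_1$ in case i), so it cannot be waved away as ``tracking parities.''

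The genuine gap is the final clause. Your plan for ``if $d>d_+$ then case ii) cannot occur'' --- that both fundamental solutions lie in the class attached to $d_{\pm}$ and that ``tracking the chains'' shows they only produce the regular extensions --- does not work as described: the $v$-chain based at $(\pm t,r)$ by itself produces infinitely many candidate values $\frac{v_k^2-4}{c}$, and nothing in the class structure alone prevents a large-index intersection $v_{2m+1}=w_{2n}$; indeed in cases i), iii), iv) the small-index intersections also give $d_{\pm}$, yet those cases are not excluded and require the whole Baker-method machinery of Sections 3--5. Ruling out case ii) for $d>d_+$ requires an additional quantitative ingredient --- in \cite{mbt} this is done with finer congruences (modulo $c^2$, producing relations between the indices $m$ and $n$) combined with the gap principle $n-1\le m\le 2n+1$ and lower bounds on $m,n$ that follow from $d>d_+$. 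Without some such second-level congruence argument, the ``moreover'' statement is not proved by your proposal.
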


Under the assumption that some special $D(4)$-quadruples do not exist, we have the following lemma, which further reduces the number of cases for fundamental solutions we need to examine.

\begin{lemma}\cite[Lemma 2.2]{glavni}\label{lem:inital_terms}
Assume that $\{a,b,c,c'\}$ is not a $D(4)$-quadruple for any $c'$ with $0<c'<c_{\nu-1}^{\pm}$. We have 
\begin{enumerate}[i)]
    \item If the equation $v_{2m}=w_{2n}$ has a solution, then $z_0=z_1=\pm 2$ and $x_0=y_1=2$.
    \item If the equation $v_{2m+1}=w_{2n+1}$ has a solution, then $z_0=\pm t$, $z_1=\pm s$, $x_0=y_1=r$ and $z_0z_1>0$.
\end{enumerate}
\end{lemma}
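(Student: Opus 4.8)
The plan is to combine Theorem~\ref{tm:inital_terms} with the hypothesis that no intermediate $D(4)$-quadruple $\{a,b,c,c'\}$ exists with $0<c'<c_{\nu-1}^{\pm}$, and to rule out by hand the value $|z_0|=\tfrac12(cr-st)$ that Theorem~\ref{tm:inital_terms} leaves open in cases i), ii) and iii). The key observation is that if $|z_0|=\tfrac12(cr-st)$ in the even/even case, then the resulting fundamental solution $(z_0,x_0)$ produces, via the gap between consecutive terms of $(v_m)$, a new value $c'$ extending $\{a,b,c\}$ that lies strictly below $c$; tracing the recurrence one sees $0<c'<c_{\nu-1}^{\pm}$, contradicting the standing assumption. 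So the only surviving possibility in i) is $z_0=z_1$ with $|z_0|=2$; one then checks that $x_0=y_1=2$ follows from the defining Pellian equations (\ref{eqn:pell_ac})--(\ref{eqn:pell_bc}) together with $\gcd$ and size constraints on the fundamental solution, after fixing the sign so that $z_0=z_1$ (not $z_0=-z_1$, which would force a solution only in a parity class we have excluded).

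First I would set up, for each of the four parity cases, the correspondence between a choice of fundamental solution $(z_0,x_0)$ (resp.\ $(z_1,y_1)$) and the element $c'$ of the $b$-sequence it descends from, exactly as in the proof of Lemma~\ref{pomocna lema za lemu ispod} and of \cite[Lemma 6.1]{mbt}: the ``predecessor'' operation $z \mapsto \tfrac12(sz - cx)$ on solutions of (\ref{eqn:pell_ac}) corresponds to $d_-$ on the triple, and iterating it down to the fundamental solution exhibits $c$ as $c_\nu^{\pm}$ and the fundamental data as the bottom of that descent. Then I would argue: in case i), the alternative $|z_0|=\tfrac12(cr-st)$ forces the descent to bottom out one step ``too high'', i.e.\ it exhibits an extension $c'$ of $\{a,b\}$ that also extends to a quadruple with $a,b,c$ and satisfies $0<c'<c_{\nu-1}^{\pm}$; this is precisely what the hypothesis forbids. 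Hence $|z_0|=2$. The same mechanism kills the $\tfrac12(cr-st)$ values appearing for $z_0$ in ii) and for $z_1$ in iii); but ii) and iii) also carry the constraint $z_0z_1<0$ and the remaining (non-excluded) value $|z_0|=t$ or $|z_1|=s$, which, combined with parity, lands us in the mixed parity sequences. I would observe that the statement of Lemma~\ref{lem:inital_terms} only asserts conclusions for $v_{2m}=w_{2n}$ and $v_{2m+1}=w_{2n+1}$, so cases ii) and iii) need no further analysis here beyond recording that the ``new'' value $\tfrac12(cr-st)$ cannot occur; only iv) remains, where Theorem~\ref{tm:inital_terms}~iv) already gives $|z_0|=t$, $|z_1|=s$ and $z_0z_1>0$, and the extra content to prove is $x_0=y_1=r$.

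For the parts genuinely requiring a computation, I would proceed as follows. In i), with $z_0=z_1=\pm2$ fixed: from $az_0^2 - cx_0^2 = 4(a-c)$ we get $4a - cx_0^2 = 4a-4c$, i.e.\ $cx_0^2 = 4c$, so $x_0^2=4$ and $x_0=2$ (positivity of $x_0$ as a fundamental solution coordinate); likewise $y_1=2$ from (\ref{eqn:pell_bc}). The sign must be the same on both sides because $z_0 = z_1$ is exactly the condition isolated in Theorem~\ref{tm:inital_terms}~i) (the product $z_0 z_1$ is constrained there, or else we are in a parity class excluded by the hypothesis together with the ``Moreover'' clause when $d>d_+$). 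In iv), with $|z_0|=t$, $|z_1|=s$, $z_0z_1>0$: substituting $z_0 = \pm t$ into (\ref{eqn:pell_ac}) gives $at^2 - c x_0^2 = 4(a-c)$, and using $bc+4=t^2$ to eliminate $t^2$ yields $a(bc+4) - c x_0^2 = 4a - 4c$, hence $c x_0^2 = abc + 4c = c(ab+4) = c r^2$, so $x_0 = r$; the computation for $y_1=r$ from (\ref{eqn:pell_bc}) with $|z_1|=s$ and $ac+4=s^2$ is identical. The main obstacle is the bookkeeping in the first paragraph's argument: showing rigorously that the ``exceptional'' fundamental solution value $\tfrac12(cr-st)$ really does descend to a $c'$ with $0<c' < c_{\nu-1}^{\pm}$ rather than merely to some $c'<c$, so that it is genuinely ruled out by the hypothesis of the lemma and not just by a weaker statement; this is where one must be careful about which sign and which parity branch one is in, and it is the step I expect to occupy most of the written proof.
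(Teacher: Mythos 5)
Your overall strategy --- feed Theorem~\ref{tm:inital_terms} into the standing hypothesis and then read off $x_0$ and $y_1$ from the Pellian equations --- is the right skeleton, and the paper itself offers no more than this: the lemma is imported from \cite{glavni} and not reproved here. Your two explicit computations are correct: $z_0=\pm2$ in \eqref{eqn:pell_ac} gives $cx_0^2=4c$, hence $x_0=2$, and $z_0=\pm t$ together with $t^2=bc+4$ gives $cx_0^2=c(ab+4)=cr^2$, hence $x_0=r$; the same works for $y_1$ via \eqref{eqn:pell_bc}. Case ii) of the lemma therefore needs nothing beyond Theorem~\ref{tm:inital_terms}~iv) and this computation.

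The genuine gap is in the one step that actually uses the hypothesis: eliminating $|z_0|=|z_1|=\tfrac{1}{2}(cr-st)$ in case i). You assert that this fundamental solution ``descends to a $c'$ with $0<c'<c_{\nu-1}^{\pm}$, contradicting the standing assumption.'' But the element it produces is $c'=\frac{z_0^2-4}{c}=d_-(a,b,c)$, which by Lemma~\ref{pomocna lema za lemu ispod} equals $c_{\nu-1}^{\pm}$ \emph{exactly}; it is not strictly below $c_{\nu-1}^{\pm}$, so the hypothesis does not forbid it (indeed $\{a,b,c_{\nu-1}^{\pm},c_{\nu}^{\pm}\}$ is the regular quadruple and always exists). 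The contradiction you invoke is therefore not there, and the real content --- showing that a solution of $v_{2m}=w_{2n}$ with $(m,n)\neq(0,0)$ built on this fundamental solution forces either a quadruple $\{a,b,c,c'\}$ with $c'$ genuinely below $c_{\nu-1}^{\pm}$ or some other obstruction --- is precisely what you defer to ``careful bookkeeping'' in your final sentence without supplying it. As written, case i) is not closed; to repair it you would need to follow the descent one step further (or use the congruence/size arguments of the cited proofs in \cite{glavni} and its predecessors), not merely the first application of $d_-$.
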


\begin{remark}\label{c_1-even}
If $c=c_1^{\pm}=a+b\pm 2r,$ then it is enough to observe the case $v_{2m}=w_{2n}$.
\end{remark}

Secondly, we observe the solutions of the system of equations (\ref{eqn:sol_pell_bc}) and (\ref{eqn:sol_pell_ab}) and determine the intersections $y=A_n=B_l$ of sequences $(A_n)_n$ and $(B_l)_l$ defined by
\begin{align}
    &A_0=y_1,\ A_1=\frac{1}{2}(ty_1+bz_1),\ A_{n+2}=tA_{n+1}-A_n,\label{seq:An}\\
    &B_0=y_2,\ B_1=\frac{1}{2}(ry_2+bx_2),\ B_{l+2}=rB_{l+1}-B_l, \ n,l\geq 0.\label{seq:Bl}
\end{align}
The initial terms of these sequences are described in the next lemma, whose proof follows \cite[Lemma 2.5]{glavni}.
\begin{lemma}\label{lem:lem_4}
Assume that $\{a, b, c', c\}$ is not a $D(4)$-quadruple for any $c'$ with $0<c'<c^{\pm}_{\nu-1}$ and $b\geq 556881$. Then, $A_{2n}=B_{2l+1}$ has no solution. Moreover, if $A_{2n} = B_{2l}$, then $y_2 = 2$. In other cases, we have $y_2 =\pm 2.$
\end{lemma}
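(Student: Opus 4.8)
The plan is to analyze the sequences $(A_n)_n$ and $(B_l)_l$ modulo small powers of $c$ and of the other parameters, exactly in the spirit of the cited \cite[Lemma 2.5]{glavni}. First I would record the congruences satisfied by the initial terms of the two sequences. From $b z^2 - c y^2 = 4(b-c)$ and $a y^2 - b x^2 = 4(a-b)$ one obtains that $y_1$ and $y_2$ divide fixed quantities and that, reducing \eqref{seq:An} and \eqref{seq:Bl} modulo $c$ (and then modulo $c^2$), the terms $A_n$ and $B_l$ stabilize into a short list of residue classes. Concretely, one shows $A_{2n}\equiv y_1\,(\bmod\ c)$ up to sign patterns coming from $t^2\equiv 4\,(\bmod\ c)$, while $B_{2l}\equiv \pm y_2\,(\bmod\ 2c/\gcd)$ and $B_{2l+1}$ lies in a different class governed by $r$. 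Matching $A_{2n}=B_{2l+1}$ then forces an incompatibility in these residues, unless the common value is small; the hypothesis $b\geq 556881$ is precisely what rules out the small-value escape, because any genuine solution with small $y$ would force $c$ (hence $b$) below that threshold, or would produce a $D(4)$-quadruple $\{a,b,c',c\}$ with $0<c'<c_{\nu-1}^{\pm}$, contradicting the standing assumption.

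Next I would treat the case $A_{2n}=B_{2l}$. Here both sequences meet along the ``even'' branch, and the congruence analysis modulo $c$ collapses the possibilities for $y_2$ to $y_2\in\{2, \tfrac12(cr-st),\dots\}$ via Theorem~\ref{tm:inital_terms}; then one eliminates the large alternative $y_2=\tfrac12(cr-st)$ by a size estimate — that value is of order $c^{3/2}$, which contradicts the bound $y_2^2 = (by_2^2)/b \le (\text{something of order } c)$ coming from $y_2$ being a fundamental solution, again using $b\geq 556881$ together with the lower bounds on $a$ from Table~\ref{tab:lowerbounds} and the inequalities $a<c_1^-<b<c_1^+,c_2^{\pm},c_3^{\pm}$ noted at the end of Section~2. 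That leaves $y_2=2$, which is the asserted conclusion in this case. The remaining ``mixed'' cases $A_{2n+1}=B_{2l}$ and $A_{2n+1}=B_{2l+1}$ are handled the same way, but now the odd-index initial term $A_1$ brings in $z_1$, and the congruence argument only narrows $y_2$ down to $\pm 2$ rather than $2$; hence the final clause ``in other cases $y_2=\pm 2$.''

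The main obstacle I expect is the case $A_{2n}=B_{2l+1}$: showing it has \emph{no} solution at all, rather than merely constraining the fundamental solution. This requires the congruences modulo $c$ (and possibly a second modulus, e.g. modulo $r$ or modulo $4$) to be genuinely incompatible for \emph{all} $n,l$, which typically needs one to compute $A_{2n}\bmod c$ and $B_{2l+1}\bmod c$ explicitly using $t^2\equiv 4$, $r^2 = ab+4$, and the relation $s^2 = ac+4$, and then to check that the two resulting finite sets of residues are disjoint. Edge cases where a residue coincidence is possible must be knocked out by the hypothesis $b\ge 556881$ (to preclude the small solution) and by the non-existence assumption on intermediate $D(4)$-quadruples $\{a,b,c',c\}$. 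Since the paper says the proof follows \cite[Lemma 2.5]{glavni}, I would carry over that structure verbatim, replacing only the numerical constant $556881$ and re-verifying the size inequalities in our setting $b=ka$ with $k\in\{7,8,10,11,12,13\}$; the computations differ from \cite{glavni} only in these constants, so per the convention stated earlier in Section~3 most of them can be omitted.
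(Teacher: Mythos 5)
Your overall plan --- pin down the initial terms by a congruence analysis of the two recurrences, mirroring the cited lemma --- is the right skeleton, but the concrete congruences you propose would not work, and the error is not cosmetic. The sequences \eqref{seq:An} and \eqref{seq:Bl} come from the Pell equations $bz^2-cy^2=4(b-c)$ and $ay^2-bx^2=4(a-b)$, whose \emph{common} coefficient is $b$, not $c$. Accordingly $t^2=bc+4\equiv 4$ and $r^2=ab+4\equiv 4$ hold modulo $b$, and one gets the clean periodic behaviour $A_{2n}\equiv y_1$, $A_{2n+1}\equiv \tfrac12(ty_1+bz_1)$, $B_{2l}\equiv y_2$, $B_{2l+1}\equiv \tfrac12(ry_2+bx_2)$ modulo (a fixed divisor of a multiple of) $b$. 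Modulo $c$, by contrast, $r^2=ab+4$ reduces to nothing useful, so your claimed congruence $B_{2l}\equiv\pm y_2 \pmod{2c/\gcd}$ is simply false: already $B_2=y_2+\tfrac{b}{2}(ay_2+rx_2)$ is not congruent to $\pm y_2$ modulo $c$ in general. The residue-disjointness argument for $A_{2n}=B_{2l+1}$ and the identification $y_2=2$ in the even--even case must be run modulo $b$ (with the usual care about factors of $2$), combined with the size bounds on the fundamental solutions $y_1,y_2$ (both are small compared with $b$), which is where the hypothesis $b\geq 556881$ actually enters.

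A second problem: you invoke Theorem~\ref{tm:inital_terms} to ``collapse the possibilities for $y_2$'', but that theorem is a statement about $z_0$ and $z_1$, the fundamental solutions of the equations in $z$; it says nothing about $y_2$. Its analogue for the $y$-system cannot be quoted off the shelf, because the pair of equations in $y$ is glued along $b$, which is the \emph{middle} element of the triple, so the ordering hypothesis $a<b<c$ of that theorem is violated after permuting roles; re-deriving the classification in this asymmetric situation is precisely the content of \cite[Lemma 2.5]{glavni} that the paper defers to. Your size estimate is also off: from $(cr-st)(cr+st)=4(c^2-ac-bc-4)$ the quantity $\tfrac12(cr-st)$ is of order $c/\sqrt{ab}$, not $c^{3/2}$, so the elimination of the ``large'' alternative has to be argued differently (and for the correct $y$-analogue of that quantity, not for $\tfrac12(cr-st)$ itself). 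In short: right general strategy, but the wrong modulus and the wrong auxiliary theorem; as written the argument does not go through.
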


Finally, we observe the solutions of the system of equations (\ref{eqn:sol_pell_ac}) and (\ref{eqn:sol_pell_ab}) and determine the intersections $x=Q_m=P_l$ of sequences $(Q_m)_m$ and $(P_l)_l$ defined by
\begin{align}
    &P_0=x_2,\ P_1=\frac{1}{2}\left(rx_2+ay_2 \right) ,\ P_{l+2}=rP_{l +1}-P_{l},\label{sequence_P_l}\\
    &Q_0= x_0,\ Q_1=\frac{1}{2}\left(sx_0+az_0 \right),\ Q_{m+2}=sQ_{m +1}-Q_{m}.\label{sequence_Q_m}
\end{align}
From the above, for the equation $x=P_l=Q_m,$ we conclude that only the following
two possibilities exist:\\
\textbf{Type $1$:} If $l\equiv m\equiv 0\pmod{2},$ then $z_0=\pm 2,$ $x_0=2, y_2=\pm2$ and $x_2=2.$\\
\textbf{Type $2$:} If $m\equiv 1\pmod{2},$ then $z_0=\pm t,$ $x_0=r,$ $y_2=\pm 2$ and $x_2=2.$\\

For the rest of this paper, we will carefully examine the following equation: 
\begin{equation}\label{eq-Q_m=P_l}
x=Q_m=P_l,
\end{equation}
while using the fundamental solutions of Types $1$ and $2$. As we mentioned in Remark \ref{c_1-even}, we only need to consider solutions in Type $1$ if $c=c_1^\pm$ since $$\dfrac{1}{2}(cr-st)=\dfrac{1}{2}\left((a+b\pm 2r)r-(r\pm a)(b\pm r) \right)=\pm 2.$$

Let's emphasise which solutions of this equation correspond to the regular extension of our triples to quadruples. For the case $c=c_1^-$ we get $(l,m)=(2,2)$ from fundamental solutions $x_0=2, z_0=2, x_2=2, y_2=-2$ and for the case $c=c_1^+$ we get $(l,m)=(2,2)$ from fundamental solutions $x_0=2, z_0=-2, x_2=2, y_2=2$.
Next, we observe the cases $c=c_2^{\pm}$. Since $d_{-}(a,ka,c_2^{\pm})=c_1^{\pm}$, that extension comes from the solution $(l,m)=(1,1)$ from fundamental solutions $x_0=r, z_0=t, x_2=2, y_2=-2$ (for $c_2^-$) and from fundamental solutions $x_0=r, z_0=-t, x_2=2, y_2=2$ (for $c_2^+$). Next, since $d_{+}(a,ka,c_2^{\pm})=c_3^{\pm}$, that extension comes from the solution $(l,m)=(3,1)$ from fundamental solutions $x_0=r, z_0=-t, x_2=2, y_2=-2$ (for $c_2^-$) and from fundamental solutions $x_0=r, z_0=t, x_2=2, y_2=2$ (for $c_2^+$). For the cases $c=c_3^{\pm}$, we will observe a different equation in the final section.

\subsection{A linear form in three logarithms}\label{subsec-3.2}

Solving recurrences \eqref{sequence_P_l} and \eqref{sequence_Q_m}, we obtain
\begin{align}
\notag P_l&=\frac{1}{2\sqrt{b}}\left((y_2\sqrt{a}+x_2\sqrt{b})\alpha^l-(y_2\sqrt{a}-x_2\sqrt{b})\alpha^{-l} \right),\\
\notag Q_m&=\frac{1}{2\sqrt{c}}\left((z_0\sqrt{a}+x_0\sqrt{c})\beta^m-(z_0\sqrt{a}-x_0\sqrt{c})\beta^{-m} \right),
\end{align} 
where 
\begin{equation}\label{alpha-beta}
\alpha=\dfrac{r +\sqrt{ab}}{2} \quad \mbox{and}\quad \beta=\dfrac{s+\sqrt{ac}}{2}.
\end{equation}
Let us define 
\begin{equation}
\gamma=\frac{\sqrt{c}(y_2\sqrt{a} + x_2\sqrt{b})}{\sqrt{b}(z_0\sqrt{a}+x_0\sqrt{c})}\quad\text{and}\quad \gamma'=\frac{\sqrt{b}(z_0\sqrt{a} + x_0\sqrt{c})}{\sqrt{c}(y_2\sqrt{a}+x_2\sqrt{b})}.
\end{equation}
We define the following linear forms in three logarithms.
\begin{equation}\label{Lambda}
\Lambda = l\log \alpha - m\log \beta + \log \gamma\quad \text{for}\quad c>b,
\end{equation}
and
 \begin{equation}\label{Lambda'}
\Lambda' = m\log \beta - l\log \alpha + \log \gamma'\quad \text{for}\quad c<b.
\end{equation}
Notice that $\Lambda'$ is used only for the case $c_1^{-}$.
\begin{lemma}\label{bound-Lambda} \hfill
\begin{enumerate}[1)]
  \item Assume that $b=ka,\ k\in \{7,8,10,11,12,13\}$ and $c=c_1^-$. If the equation $P_l=Q_m$ has a solution $(l,m)$ of Type $1$ with $l\geq 1$, then
$$
0<\Lambda' < 2.6 \alpha^{-2l}.
$$  
   \item Assume that $b=ka,\ k\in \{7,8,10,11,12,13\}$ and $c \in \{c_1^+,c_2^{\pm}\}$.If the equation $P_l=Q_m$ has a solution $(l,m)$ of Type $1$ with $m\ge 3$, then
$$
0<\Lambda < 2.6 \beta^{-2m}.
$$
If $P_l=Q_m$ has a solution $(l,m)$ of Type $2$ with $m\geq3$, then
$$
0<\Lambda < 1.5 a^2\beta^{-2m}.
$$
\end{enumerate}
\end{lemma}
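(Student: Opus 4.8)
The plan is to establish the estimate $0<\Lambda<\text{(small)}$ by three standard moves: first identify which of $P_l$, $Q_m$ is the ``larger'' term, second rewrite the equation $P_l=Q_m$ so that $\alpha^l$, $\beta^m$, $\gamma$ are grouped into a single expression that is forced to be close to $1$, and third bound the error from the small remainder terms. Concretely, from the closed forms
\[
P_l=\frac{1}{2\sqrt{b}}\bigl((y_2\sqrt{a}+x_2\sqrt{b})\alpha^{l}-(y_2\sqrt{a}-x_2\sqrt{b})\alpha^{-l}\bigr),\quad
Q_m=\frac{1}{2\sqrt{c}}\bigl((z_0\sqrt{a}+x_0\sqrt{c})\beta^{m}-(z_0\sqrt{a}-x_0\sqrt{c})\beta^{-m}\bigr),
\]
I would write $P_l=Q_m$ as
\[
(z_0\sqrt{a}+x_0\sqrt{c})\beta^{m}=\frac{\sqrt{c}}{\sqrt{b}}(y_2\sqrt{a}+x_2\sqrt{b})\alpha^{l}-\frac{\sqrt{c}}{\sqrt{b}}(y_2\sqrt{a}-x_2\sqrt{b})\alpha^{-l}+(z_0\sqrt{a}-x_0\sqrt{c})\beta^{-m},
\]
so that dividing through by $(z_0\sqrt{a}+x_0\sqrt{c})\beta^m$ and using the definition of $\gamma$ gives
\[
1=\gamma\,\alpha^{l}\beta^{-m}\Bigl(1-\tfrac{y_2\sqrt a-x_2\sqrt b}{y_2\sqrt a+x_2\sqrt b}\alpha^{-2l}\Bigr)+\frac{z_0\sqrt a-x_0\sqrt c}{z_0\sqrt a+x_0\sqrt c}\beta^{-2m}.
\]
Taking logarithms, $\Lambda=l\log\alpha-m\log\beta+\log\gamma$ equals $-\log\bigl(1-(\text{error terms})\bigr)$, and the error terms are $O(\alpha^{-2l})$ and $O(\beta^{-2m})$ in absolute value. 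The sign $\Lambda>0$ comes from checking that $\gamma\alpha^l\beta^{-m}<1$ (equivalently that the dominant ``$+$'' parts are slightly overshot and must be corrected downward by the subtracted tails), which is where one uses that $c>b$ and the explicit sign pattern of the fundamental solutions in Type $1$ versus Type $2$.

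For the constants, I would split by type. In Type $1$ we have $z_0=\pm2$, $x_0=2$, $y_2=\pm2$, $x_2=2$, so $\tfrac{y_2\sqrt a-x_2\sqrt b}{y_2\sqrt a+x_2\sqrt b}$ and $\tfrac{z_0\sqrt a-x_0\sqrt c}{z_0\sqrt a+x_0\sqrt c}$ are both bounded in absolute value by $1$ (indeed strictly less, since $a<b<c$), so the combined error after taking $-\log(1-\cdot)$ is comfortably below $2.6\beta^{-2m}$ once $m\ge3$ — one uses $\alpha^{-2l}$ is itself $O(\beta^{-2m})$ or even smaller via the relation $l\log\alpha\approx m\log\beta$ coming from $P_l=Q_m$, together with the crude bound $\beta>a$ (hence $\beta^{-2m}$ is tiny for $m\ge3$ and $a\ge10^4$), and the numerical slack from $2<2.6$ and $-\log(1-x)\le x+x^2$ for small $x$. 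In Type $2$ we instead have $z_0=\pm t$, $x_0=r$, so the coefficient $\tfrac{z_0\sqrt a-x_0\sqrt c}{z_0\sqrt a+x_0\sqrt c}$ is no longer $O(1)$ but of size $O(1)$ after rationalizing against $z_0\sqrt a+x_0\sqrt c=\pm t\sqrt a+r\sqrt c$; here one gets an extra factor coming from $\gamma$ itself, which with $x_0=r\sim\sqrt{ka}\cdot a$ and $z_0=t\sim\sqrt{kac}$ carries the $a^2$ — and after bookkeeping the bound is $1.5\,a^2\beta^{-2m}$. The case $c=c_1^-$ (with $c<b$) is symmetric with $\alpha$ and $\beta$ swapped in role, giving $0<\Lambda'<2.6\alpha^{-2l}$, and this is exactly why the statement treats $c_1^-$ separately and only in Type $1$ (the computation after Remark~\ref{c_1-even} already shows $\tfrac12(cr-st)=\pm2$, forcing Type $1$).

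The main obstacle will be getting the numerical constants honestly below the claimed thresholds while keeping the argument uniform across all six values of $k$ and across $c\in\{c_1^+,c_2^\pm\}$ (and $c_1^-$). In particular one must control the crossed term $\alpha^{-2l}$ in terms of $\beta^{-2m}$: from $P_l=Q_m$ and the fact that the dominant terms match to leading order, $l\log\alpha=m\log\beta+O(1)$, but to turn this into $\alpha^{-2l}\le C\beta^{-2m}$ with an explicit $C$ one needs a lower bound on $l$ in terms of $m$ (or directly a comparison $\alpha^l\asymp\beta^m$ with explicit constants from the closed forms), which in turn uses $a\ge a_p^{(k)}$ from Table~\ref{tab:lowerbounds}, Lemma~\ref{lower_on_b}, and the inequalities $a<c_1^-<b<c_1^+<c_2^\pm<c_3^\pm$ noted at the end of Section~\ref{section_2}. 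The sign verification $\Lambda>0$ (and $\Lambda'>0$) is a second place demanding care: one must check that the correction term actually has the sign that makes $\gamma\alpha^l\beta^{-m}$ strictly less than $1$, which depends on the $\pm$ choices and on $l\ge1$ respectively $m\ge3$, exactly the hypotheses imposed. Beyond these two points the argument is the routine ``write the Pellian intersection as a near-unit, take logs, bound the tail'' computation; I would present it by doing one representative case ($c=c_2^-$, Type $2$, say) in detail and remarking that the others follow identically with the indicated substitutions, since the paper has already announced that calculations parallel to \cite{glavni} and \cite{ahpt} will be abbreviated.
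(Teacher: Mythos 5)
Your overall route --- write $P_l=Q_m$ via the closed forms, recognize $e^{\Lambda}$ (resp.\ $e^{\Lambda'}$) as a ratio of the two dominant terms, and bound the tails --- is the same one the paper uses, and your reading of why $c_1^-$ is treated separately and of where the $a^2$ in the Type~2 constant originates is broadly right. But there are genuine gaps at exactly the two points you flag as ``demanding care''. First, the sign: since $\Lambda=\log(\gamma\alpha^l\beta^{-m})$, positivity of $\Lambda$ is equivalent to $\gamma\alpha^l\beta^{-m}>1$, not $<1$ as you assert; and, more importantly, you never exhibit a mechanism that proves it. The paper's device is to set $E=\frac{z_0\sqrt a+x_0\sqrt c}{\sqrt c}\beta^m$ and $F=\frac{y_2\sqrt a+x_2\sqrt b}{\sqrt b}\alpha^l$ and to use the Pell relations $az_0^2-cx_0^2=4(a-c)$ and $ay_2^2-bx_2^2=4(a-b)$ to turn $P_l=Q_m$ into the symmetric identity $E+4\frac{c-a}{c}E^{-1}=F+4\frac{b-a}{b}F^{-1}$; comparing the coefficients $4\frac{c-a}{c}$ and $4\frac{b-a}{b}$ (this is precisely where $c>b$ versus $c<b$ enters) and factoring yields $(E-F)\left(EF-4\frac{c-a}{c}\right)>0$, and since $EF$ is large for $l\ge1$, $m\ge1$, the sign of $E-F$, hence of the linear form, follows. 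Without this (or an equivalent) step the positivity claims are unproved.

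Second, your proposed control of the ``crossed term'' $\alpha^{-2l}$ by $C\beta^{-2m}$ is not a technicality to be patched but a dead end: for $c>b$ one has $\gamma\approx\sqrt{c/b}\cdot(\text{const})\gg1$, hence $\alpha^{l}\approx\gamma^{-1}\beta^{m}$ and $\alpha^{-2l}\approx (c/b)\,\beta^{-2m}$, which is much \emph{larger} than $\beta^{-2m}$ and grows with $c$; no absolute constant $C$ works, so the plan as written cannot deliver $2.6\beta^{-2m}$. In the paper's identity this issue evaporates because the tail coming from the other sequence enters the difference $F-E=4\frac{c-a}{c}E^{-1}-4\frac{b-a}{b}F^{-1}$ with a favorable sign and is simply discarded, giving $0<\Lambda<\frac{F-E}{E}<4E^{-2}$, a bound involving only $\beta^{-2m}$ (and symmetrically $0<\Lambda'<4F^{-2}$ for $c=c_1^-$). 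The stated constants then drop out of the explicit leading coefficients: for instance $F=\frac{\pm2\sqrt a+2\sqrt b}{\sqrt b}\alpha^l$ gives $4F^{-2}\le\frac{k}{(\sqrt k-1)^2}\alpha^{-2l}<2.6\,\alpha^{-2l}$ for $k\ge7$, while in Type~2 the degeneration $-t\sqrt a+r\sqrt c=\frac{4(c-a)}{t\sqrt a+r\sqrt c}$ makes the leading coefficient of $E$ of order $(ab)^{-1/2}$, which is what produces the $a^2$. So the skeleton matches the paper, but both steps you defer are exactly the ones carried by the paper's specific algebraic identity, and one of your two proposed repairs would fail outright.
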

\begin{proof}
    Proof of $2)$ follows \cite[Lemma 2.6]{glavni} and here we will demonstrate the proof of $1)$.
    Let's define
    \[ E=\frac{z_0\sqrt{a}+x_0\sqrt{c}}{\sqrt{c}}\beta^m, \qquad F=\frac{y_2\sqrt{a}+x_2\sqrt{b}}{\sqrt{b}}\alpha^l\]
    and then our form is $\Lambda'=\log\frac{E}{F}$.
    It is easy to see that $E,F>1$ for $l,m \geq 1$. 
    We can rewrite the equation $P_l=Q_m$ to get
    \[
E + 4\left(\frac{c-a}{c}\right)E^{-1}
= F + 4\left(\frac{b-a}{b}\right)F^{-1}
\mathrel{\overset{b>c}{>}}
F + 4\left(\frac{c-a}{c}\right)F^{-1}
\]
Then $(E - F)\left(EF - 4\frac{c-a}{c}\right) > 0$. Since $EF > 4\left(\frac{c-a}{c}\right)$ we get $E>F$. It follows that $\Lambda'>0$ and \[\Lambda'=\log(1+\frac{E-F}{F})<\frac{E-F}{F}<4\left(\frac{c-a}{c}\right)F^{-2}<4F^{-2}.\]
Now, since we observe only the solutions in Type $1$ for $c=c_1^-$, we have
$$\Lambda'<4\frac{b}{(\pm2\sqrt{a}+2\sqrt{b})^2}\alpha^{-2l}<\frac{k}{(\sqrt{k}-1)^2}\alpha^{-2l}<2.6\alpha^{-2l}. $$

\end{proof}
\begin{lemma} \hfill
\begin{enumerate}[1)]
    \item Assume that $b=ka,\ k\in \{7,8,10,11,12,13\}$ and $c=c_1^-$. If $P_l=Q_m$ has a solution $(l,m)$ with $l \geq 1$, then $l \leq m$.
    \item Assume that $b=ka,\ k\in \{7,8,10,11,12,13\}$ and $c \in \{c_1^+,c_2^{\pm}\}$. If $P_l = Q_m$ has a solution $(l,m)$ with $m \geq 3$, then $m \leq l$.
\end{enumerate}
\end{lemma}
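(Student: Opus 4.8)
The plan is to compare the exponential growth rates of the two sides of $P_l=Q_m$. From the solved recurrences, $P_l$ is of size $\alpha^l$ and $Q_m$ of size $\beta^m$ up to bounded factors, and $\beta=\tfrac12(s+\sqrt{ac})>\tfrac12(r+\sqrt{ab})=\alpha$ precisely when $c>b$ (since then $s>r$ and $\sqrt{ac}>\sqrt{ab}$); this is the situation in part 2, where $c\in\{c_1^+,c_2^\pm\}$, while $\beta<\alpha$ in part 1, where $c=c_1^-<b$. Hence $P_l=Q_m$ forces $\alpha^l$ and $\beta^m$ to be comparable, and since the bases are unequal the exponents must be unequal in the opposite direction. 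I would make this precise through the linear forms $\Lambda,\Lambda'$ of Lemma \ref{bound-Lambda}, using only the inequalities $\Lambda>0$ and $\Lambda'>0$ established there.

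For part 2, suppose for contradiction that $P_l=Q_m$ with $m\ge 3$ but $l\le m-1$. Note first that $\gamma>0$ (for Type $2$ this uses $z_0\sqrt a+x_0\sqrt c\ge r\sqrt c-t\sqrt a=4(c-a)/(r\sqrt c+t\sqrt a)>0$). From $\Lambda>0$, $l\le m-1$, and $\log\alpha<\log\beta$ we get
\[
0<\Lambda\le (m-1)\log\alpha-m\log\beta+\log\gamma=\log\!\Big(\frac{\gamma\,\alpha^{m-1}}{\beta^m}\Big),
\]
so it suffices to prove $\gamma\,\alpha^{m-1}<\beta^m$. In Type $1$ — the only possibility when $c=c_1^+$ — the definition of $\gamma$ gives $\gamma<\big(1-\sqrt{a/c}\,\big)^{-1}\big(1+\sqrt{a/b}\,\big)<3$ since $c>b\ge 7a$, while $\beta^m\ge\beta\,\alpha^{m-1}>3\,\alpha^{m-1}$ because $\beta>\alpha>\sqrt{ab}>3$. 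In Type $2$ (hence $c=c_2^\pm$) the same lower bound on $z_0\sqrt a+x_0\sqrt c$, together with $r\sqrt c+t\sqrt a<2r\sqrt c$, yields only $\gamma<2r<3\alpha$, which is not an absolute constant; but now $c=c_2^\pm>\tfrac12a^2b$ (using $c_1^\pm>a$), so $\beta/\alpha>\tfrac13\sqrt{c/b}$ is enormous and $\beta^m=(\beta/\alpha)^m\alpha^m$ exceeds $3\alpha^m>\gamma\,\alpha^{m-1}$. Either way $\Lambda<0$, a contradiction, so $m\le l$.

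Part 1 is the mirror image. For $c=c_1^-$ we have $c<b$, hence $\beta<\alpha$, and only Type $1$ solutions occur. Suppose $P_l=Q_m$ with $l\ge 1$ and $l\ge m+1$. From $\Lambda'>0$, $l\ge m+1$ and $\log\beta<\log\alpha$,
\[
0<\Lambda'\le m\log\beta-(m+1)\log\alpha+\log\gamma'=\log\!\Big(\frac{\gamma'\,\beta^m}{\alpha^{m+1}}\Big)<\log\!\Big(\frac{\gamma'}{\alpha}\Big),
\]
and since $\gamma'=1/\gamma<4$ while $\alpha>\sqrt{ab}>10^4$ (using $b\ge 7a$ and $a<c_1^-<b$), the right-hand side is negative, contradicting $\Lambda'>0$; hence $l\le m$.

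The only genuinely delicate point is the Type $2$ estimate in part 2: there $\gamma$ is not bounded by an absolute constant but grows like $\sqrt{ab}\approx\alpha$, so a crude comparison $\gamma<\mathrm{const}$ fails. The saving observation is that Type $2$ solutions occur only when $c=c_2^\pm$, in which case $c/b$ has size $\approx a^2$ and hence $\beta/\alpha$ is astronomically large, more than absorbing the extra factor of $\gamma$. Everything else is routine estimation with the explicit expressions for $\alpha,\beta,\gamma$ and the numerical bounds $a\ge 10^4$, $b\ge 10^5$.
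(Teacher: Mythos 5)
Your proof is correct and follows essentially the same route as the paper: both arguments rest on the positivity of $\Lambda$ (resp.\ $\Lambda'$) from Lemma \ref{bound-Lambda} combined with elementary size comparisons of $\alpha$, $\beta$ and $\gamma$ (resp.\ $\gamma'$), the paper phrasing this as the inequality $(\alpha/\beta)^l>\gamma'/\beta$ while you argue by contradiction from $l\geq m+1$ (resp.\ $l\leq m-1$). Your explicit handling of Type $2$ in part 2), where $\gamma\asymp r$ is absorbed by the huge ratio $\beta/\alpha\asymp\sqrt{c/b}$, correctly fills in the step the paper delegates to the cited reference.
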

\begin{proof}
    Proof of $2)$ follows \cite[Lemma 8]{ahpt} and here we will demonstrate the proof of $1)$. Since we proved in the previous Lemma that $\Lambda'>0$, we have
    \[\frac{m}{l}>\frac{\log\alpha}{\log\beta}-\frac{\log\gamma'}{l\log\beta}.\] To prove our statement, we need to show that the right-hand side is greater than $1-\frac{1}{l}$, which is equivalent to proving that
    \[\left(\frac{\alpha}{\beta}\right)^l>\frac{\gamma'}{\beta}.\]
    Since $ac>a^2>10864^2$, we have
    \[\frac{\alpha}{\beta}=\frac{r+\sqrt{ab}}{s+\sqrt{ac}}=\frac{\sqrt{1+\frac{4}{ab}}+1}{\sqrt{1+\frac{4}{ac}}+1}\sqrt{\frac{b}{c}}>\frac{2}{2.1}\sqrt{\frac{b}{c}}.\]
    Also,
   \begin{align*}
\frac{b}{c} 
&= \frac{ka}{c_1^-} 
= \frac{ka}{(k+1)a - 2\sqrt{ka^2 + 4}} 
= \frac{ka^2\!\left(k+1 + 2\sqrt{k + \frac{4}{a^2}}\right)}{(k-1)^2 - \frac{16}{a^2}} \\
&> \frac{k}{(k-1)^2}\!\left(k+1 + 2\sqrt{k}\right)
= \frac{k}{(\sqrt{k}-1)^2}.
\end{align*}
Since $l\geq1$ we get \[\left(\frac{\alpha}{\beta}\right)^l>1.31.\]
On the other hand, since for $c=c_1^-$ we only observe fundamental solutions of Type $1$ we get
\begin{align*}
    \frac{\gamma'}{\beta}&=\frac{2\sqrt{b}(\pm2\sqrt{a}+2\sqrt{c})}{(s+\sqrt{ac})\sqrt{c}(2\sqrt{a}+2\sqrt{b})}<\frac{\sqrt{b}(\sqrt{a}+\sqrt{c})}{c\sqrt{a}(-\sqrt{a}+\sqrt{b})} \\
    &=\frac{\sqrt{k}(\sqrt{a}+\sqrt{c})}{c(\sqrt{k}-1)\sqrt{a}}\leq\frac{2\sqrt{k}}{(\sqrt{k}-1)a}<1.
    \end{align*}
    Finally, we have
    \[\left(\frac{\alpha}{\beta}\right)^l>1.31>1>\frac{\gamma'}{\beta}.\]

\end{proof}

For any nonzero algebraic number $\alpha$ of degree $d$ over $\QQ$ whose minimal polynomial over $\ZZ$ is $a_0 \prod_{j=1}^{d}(X-\alpha^{(j)}),$ we denote by 
$$ 
 h(\alpha)= \frac{1}{d}\left(\log |a_0|+\sum_{j=1}^{d}\log\max\left(1,\left|\alpha^{(j)} \right| \right) \right)
$$ 
its absolute logarithmic height. We recall the following result due to Matveev \cite{Matveev:2000}.
\begin{lemma}\label{Matveev}
Denote by $\alpha_1,\ldots,\alpha_j$ algebraic numbers, not $0$ or $1,$ by $\log\alpha_1,\ldots,$ $\log\alpha_j$ determinations of their logarithms, by $D$ the degree over $\QQ$ of the number field $\KK=\QQ(\alpha_1,\ldots,\alpha_j),$ and by $b_1,\ldots,b_j$ integers. Define $B=\max\{|b_1|,\ldots,|b_j|\}$ and 
$$A_i=\max\{Dh(\alpha_i),|\log\alpha_i|,0.16\}\;\; (1\leq i\leq j),$$
where $h(\alpha)$ denotes the absolute logarithmic Weil height of $\alpha.$ 
Assume that the number 
$$
\Lambda= b_1\log\alpha_1+\cdots+b_n\log\alpha_j 
$$ 
does not vanish. Then 
$$ 
|\Lambda|\geq \exp\{-C(j,\chi)D^2A_1\cdots A_j\log(eD)\log(eB)\},
$$ 
where $\chi=1$ if $\KK \subset\RR$ and $\chi=2$ otherwise and 
$$ 
C(j,\chi)=\min\left\{\frac{1}{\chi}\left( \frac{1}{2}ej\right)^{\chi}30^{j+3}j^{3.5},2^{6j+20}\right\}.
$$
\end{lemma}
By applying this result we obtain upper bound for $l$ and $m$ in terms of $a$.
\begin{proposition}\label{proposition-1}
Assume that $c\in \{c_1^+, c_2^\pm\}.$ If $P_l=Q_m,$ with $m>1$ then
\begin{align*}
\frac{l}{\log(el)}&< 3.34\cdot 10^{13}\cdot\log^2(8.09c^2),\quad \text{with solutions of Type $1$},\; \\
\frac{l}{\log(el)}&< 6.63\cdot 10^{13}\cdot\log^2(8.09c^2), \quad \text{with solutions of Type $2$}.
\end{align*}
If $Q_m=P_l,\ l\geq1$ with $c=c_1^-,$ then we get 
\begin{align*}
\frac{m}{\log(em)}< 13.36\cdot 10^{13}\cdot\log^2(21.3a),\quad \text{for}\; k=7,8,10,11,12,13. 
\end{align*}
\end{proposition}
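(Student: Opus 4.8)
The plan is to apply Matveev's theorem (Lemma~\ref{Matveev}) to the forms $\Lambda$ and $\Lambda'$ of \eqref{Lambda}--\eqref{Lambda'}, which are linear forms in the $j=3$ logarithms $\log\alpha,\log\beta,\log\gamma$ (respectively $\log\alpha,\log\beta,\log\gamma'$), and then to compare the resulting lower bound with the upper bounds $0<\Lambda<2.6\,\beta^{-2m}$, $0<\Lambda<1.5a^{2}\beta^{-2m}$, $0<\Lambda'<2.6\,\alpha^{-2l}$ of Lemma~\ref{bound-Lambda}. Since those lemmas give positivity, the non-vanishing hypothesis of Matveev is satisfied. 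First I would fix the number field: multiplying numerator and denominator of $\gamma$ twice by $\sqrt a$ removes the lone factors $\sqrt a$ and shows $\alpha,\beta,\gamma,\gamma'\in\KK:=\QQ(\sqrt{ab},\sqrt{ac})$; since $b=ka$ with $k\in\{7,8,10,11,12,13\}$ not a perfect square and $ab,ac,bc$ are not perfect squares (they are $4$ less than squares), $\KK$ is real, so we may take $D=4$ and $\chi=1$, giving $C(3,1)=\min\{\tfrac{3e}{2}\cdot30^{6}\cdot3^{3.5},\,2^{38}\}<1.4\cdot10^{11}$.

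Next I would bound the heights. Since $\alpha$ and $\beta$ are roots of $X^{2}-rX+1$ and $X^{2}-sX+1$ they are units, so $h(\alpha)=\tfrac12\log\alpha$, $h(\beta)=\tfrac12\log\beta$, whence $A_1=2\log\alpha$ and $A_2=2\log\beta$; from $r<\sqrt{ab+4}$, $s<\sqrt{ac+4}$ and $a<c$ one checks $A_1<\log(13.92a^{2})$ and $A_2<\log(8.09c^{2})$. The delicate quantity is $h(\gamma)=h(\gamma')$. For Type~$1$ solutions ($z_0=\pm2$, $x_0=2$, $y_2=\pm2$, $x_2=2$) I would rationalize $\gamma$ as above and cancel the spurious powers of $a$, reaching $\gamma=P/Q$ with $P$ an algebraic integer of $\KK$ and $Q\in\ZZ$ dividing $4k(c-a)$ for $c\in\{c_1^{+},c_2^{\pm}\}$ (and dividing $c(k-1)$ for $c=c_1^{-}$); estimating $a<c$ — and $c/a\ge(\sqrt k+1)^{2}$ for $c\in\{c_1^{+},c_2^{\pm}\}$ — bounds each of the four conjugates of $\gamma$ by an absolute constant $\le3.3$, so $h(\gamma)\le\tfrac14\bigl(\log|Q|+\sum_{j}\log\max(1,|\gamma^{(j)}|)\bigr)$, and since $c>10^{5}$ this is below $\tfrac14\log(8.09c^{2})$, i.e. $A_3<\log(8.09c^{2})$ (and, for $c=c_1^{-}$, $A_3<\log(21.3a)$ since $c_1^{-}\le(\sqrt{13}-1)^{2}a$). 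For Type~$2$ solutions ($z_0=\pm t$, $x_0=r$) the conjugate $\pm t\sqrt a+r\sqrt c$ of $z_0\sqrt a+x_0\sqrt c$ nearly cancels while its partner is $\asymp\sqrt{ab}$, so one conjugate of $\gamma$ is of size $\asymp\sqrt{ab}$ — this is exactly the origin of the factor $a^{2}$ in Lemma~\ref{bound-Lambda} — and the same computation gives $A_3<\log(8.09\,a^{2}c^{2})$.

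Matveev then yields $|\Lambda|>\exp\{-C(3,1)D^{2}A_1A_2A_3\log(eD)\log(eB)\}$ with $B=\max\{l,m\}$. By the companion lemma, $m\le l$ for $c\in\{c_1^{+},c_2^{\pm}\}$ and $l\le m$ for $c=c_1^{-}$, so $B=l$ (resp. $m$). Take $c\in\{c_1^{+},c_2^{\pm}\}$, $m\ge3$; combining with $\Lambda<2.6\,\beta^{-2m}$ gives $2m\log\beta-\log2.6<C(3,1)D^{2}A_1A_2A_3\log(eD)\log(el)$. Because $\Lambda<3$ and $|\log\gamma|\le A_3$ we get $2m\log\beta>2l\log\alpha-2A_3-6=lA_1-2A_3-6$, so the factor $A_1=2\log\alpha$ cancels and $l<C(3,1)D^{2}A_2A_3\log(eD)\log(el)+O(A_3/A_1)$; dividing by $\log(el)$ and inserting $A_2,A_3<\log(8.09c^{2})$ (with $C(3,1)D^{2}\log(eD)$ and the remaining elementary factors absorbed into the constant) yields $\tfrac{l}{\log(el)}<3.34\cdot10^{13}\log^{2}(8.09c^{2})$ for Type~$1$, and the same argument with the larger $A_3$ gives the Type~$2$ bound $6.63\cdot10^{13}\log^{2}(8.09c^{2})$. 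For $c=c_1^{-}$, $l\ge1$, one runs the symmetric argument with $\Lambda'<2.6\,\alpha^{-2l}$: bound $2l\log\alpha$ by Matveev with $B=m$, use $2l\log\alpha>2m\log\beta-2A_3-6=mA_2-2A_3-6$ to cancel $A_2$, and then $A_1A_3$ — now the surviving product, bounded by a constant multiple of $\log^{2}(21.3a)$ — gives $\tfrac{m}{\log(em)}<13.36\cdot10^{13}\log^{2}(21.3a)$. The remaining case $m=2$ is harmless: there $\Lambda$ forces $l\asymp\log\beta/\log\alpha$, hence $l$ is small and the asserted bound is trivial.

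The step I expect to be the main obstacle is the height estimate for $\gamma$: the rationalization has to be carried out carefully enough that $\gamma=P/Q$ with $Q$ a genuinely small rational integer (every extraneous power of $a$ must drop out), all four Galois conjugates must be bounded — in Type~$2$ this means simultaneously controlling the conjugate in which $\pm t\sqrt a+r\sqrt c$ nearly cancels and its large partner — and one must finally verify that the clean closed forms $\log(8.09c^{2})$ and $\log(21.3a)$ genuinely dominate after substituting $b=ka$, $k\le13$, and $b>10^{5}$. Everything else is bookkeeping with the constant $C(3,1)$ and the elementary inequalities relating $\alpha$, $\beta$, $\gamma$ to $a$ and $c$.
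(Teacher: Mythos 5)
Your proposal follows essentially the same route as the paper: Matveev's theorem with $j=3$, $D=4$, $\chi=1$ applied to $\Lambda$ (resp.\ $\Lambda'$), the heights $h(\alpha)=\tfrac12\log\alpha$ and $h(\beta)=\tfrac12\log\beta$ coming from the fact that $\alpha,\beta$ are units, a height bound for $\gamma=\gamma'^{-1}$, and the cancellation of one logarithmic factor via $m\le l$ (resp.\ $l\le m$) before comparing with the upper bounds of Lemma~\ref{bound-Lambda}. The one quantitative claim in your sketch that is off is $A_3<\log(8.09c^2)$ (resp.\ $A_3<\log(21.3a)$ for $c=c_1^-$): the height of $\gamma$ over the degree-four field is genuinely of size about $\tfrac34\log(\mathrm{const}\cdot c^2)$ --- the paper obtains $h(\gamma')<\tfrac32\log(21.3a)$ and hence takes $A_3=6\log(21.3a)$ --- and this extra factor of roughly $3$ (resp.\ $6$) is precisely what produces the constants $3.34\cdot10^{13}$, $6.63\cdot10^{13}$ and $13.36\cdot10^{13}$ in the statement; with the corrected $A_3$ your argument reproduces the paper's proof, so this is a numerical underestimate in the step you yourself flagged as delicate rather than a structural gap.
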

\begin{proof}
    We will demonstrate the proof for the case $c=c_1^-$ since the proof for the other cases closely follows \cite[Proposition 2.9]{glavni}. We apply Lemma \ref{proposition-1}
 with $j=3$ and $\chi=1$ to the linear form \ref{Lambda'} and take \[D=4,\ b_1=m,\ b_2=-l,\ b_3=1,\ \alpha_1=\beta,\ \alpha_2=\alpha,\ \alpha_3=\gamma'.\]
 Since $l\leq m$, we can take $B=m$. Also, we have
 \[h(\alpha_1)=\frac{1}{2}\log\beta,\ h(\alpha_2)=\frac{1}{2}\log\alpha.\]
 Since $\gamma'=\gamma^{-1}$, then $h(\gamma)=h(\gamma')$ and from \cite[Proposition 2.9]{glavni} we have
 \[h(\gamma)<\frac{1}{4}\log\left[\frac{2^4r^4c^4(1+\sqrt{k})^4}{(c-a)^2}\right].\]
 We have \begin{align*}(c_1^--a)^2&=a^2(2\sqrt{k+\frac{4}{a^2}}-k)^2>a^2(2\sqrt{k}-k)^2,\\
 r&<a\sqrt{1.1k},\\ c=c_1^-&=a(k+1-2\sqrt{k+\frac{4}{a^2}})<a(\sqrt{k}-1)^2,\end{align*} and now it follows that
 \[h(\gamma')=h(\gamma)<\frac{1}{4}\log(93309063a^6)<\frac{3}{2}\log(21.3a).\]
 By applying Lemma \ref{proposition-1} with
 \[A_1=2\log\beta,\ A_2=2\log\alpha,\ A_3=6\log(21.3a)\]
 we get
 \begin{align}\label{Matveev_0}
\log|\Lambda'|>-1.3901&\cdot 10^{11}\cdot16\cdot2 \cdot\log\beta\\&\cdot2\log\alpha\cdot6\log(21.3a)\cdot\log(4e)\cdot\log(em).\nonumber
\end{align}
From $1)$ of Lemma \ref{bound-Lambda} and using $l\geq1$, it is easy to conclude that $$m\log\beta<l\log\alpha+2.6\alpha^{-2l}-\log\gamma'<2l\log\alpha.$$
Also,
$$\log|\Lambda'|<-1.9069l\log\alpha$$
and
$$\log\alpha=\log\left[a\left(\sqrt{k+\frac{4}{a^2}}+\sqrt{k}\right)\right]<\log(7.212a)<\log(21.3a).$$
Now, by combining everything, we get
\[\frac{m}{\log(em)}<1.336\cdot10^{14}\cdot\log^2(21.3a),\ \text{for}\ k=7,8,10,11,12,13. \qedhere\]
\end{proof}


\section{Lower bounds for $m$ and $l$}\label{section_5}

In this section, we examine the equation \[x=Q_m=P_l.\] Firstly, we state a useful result from \cite{glavni} and then apply it to obtain lower bounds for the indices $m$ and $l$ in terms of $a$.
\begin{lemma}\cite[Lemma 3.1]{glavni}\label{lem_6} If $a$ is odd, then 
\begin{align}
\label{cong_1} Q_{2m}&\equiv  x_0+\dfrac{1}{2} a(cx_0m^2+sz_0m)\pmod{a^2},\\
P_{2l}&\equiv  x_2+\dfrac{1}{2} a(bx_2l^2+ry_2l)\pmod{a^2} .
\end{align}
If $a$ is even, then 
\begin{align}
 Q_{2m}&\equiv  x_0+\dfrac{1}{2} a(cx_0m^2+sz_0m)\pmod{\dfrac{1}{2} a^2},\\
P_{2l}&\equiv  x_2+\dfrac{1}{2} a(bx_2l^2+ry_2l)\pmod{\dfrac{1}{2} a^2} .
\end{align}
\end{lemma}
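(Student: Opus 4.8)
The plan is to reduce both congruences to a single induction on the even-indexed subsequences. First I would fold two steps of the recurrence $Q_{m+2}=sQ_{m+1}-Q_m$ into one: from $Q_{2m}=sQ_{2m-1}-Q_{2m-2}$ we get $sQ_{2m-1}=Q_{2m}+Q_{2m-2}$, and substituting into $Q_{2m+2}=(s^2-1)Q_{2m}-sQ_{2m-1}$ yields
\[
Q_{2m+2}=(s^2-2)Q_{2m}-Q_{2m-2}=(ac+2)Q_{2m}-Q_{2m-2}
\]
using $s^2=ac+4$; similarly $P_{2l+2}=(ab+2)P_{2l}-P_{2l-2}$ from $r^2=ab+4$. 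Hence it suffices to analyze the sequence $R_m:=Q_{2m}$, which satisfies $R_{m+1}=(ac+2)R_m-R_{m-1}$ with $R_0=x_0$ and, by a direct computation from the definitions of $Q_0,Q_1$,
\[
R_1=Q_2=x_0+\frac{1}{2}a\left(cx_0+sz_0\right),
\]
together with the entirely parallel statement for $(P_{2l})_l$ obtained by replacing $(c,x_0,z_0,s)$ with $(b,x_2,y_2,r)$.

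Next I would set $g(m):=x_0+\frac{1}{2}a\left(cx_0m^2+sz_0m\right)$ and show $R_m\equiv g(m)$ by induction on $m$, working modulo $a^2$ when $a$ is odd and modulo $\frac{1}{2}a^2$ when $a$ is even. The base cases $R_0=g(0)$ and $R_1=g(1)$ are exactly the two computations above. For the inductive step one checks the exact identity
\[
g(m+1)+g(m-1)-(ac+2)g(m)=-\frac{1}{2}a^2 c\left(cx_0m^2+sz_0m\right),
\]
equivalently $(ac+2)g(m)-g(m-1)=g(m+1)+\frac{1}{2}a^2 c\left(cx_0m^2+sz_0m\right)$. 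Granting $R_m\equiv g(m)$ and $R_{m-1}\equiv g(m-1)$ modulo the relevant modulus $M$, multiplication of the first congruence by $ac+2$ preserves it, so $R_{m+1}=(ac+2)R_m-R_{m-1}\equiv g(m+1)\pmod M$ provided the error term $\frac{1}{2}a^2 c\left(cx_0m^2+sz_0m\right)$ vanishes modulo $M$. The same calculation with $(c,x_0,z_0,s)\mapsto(b,x_2,y_2,r)$ disposes of $P_{2l}$.

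The step I expect to need the most care is the bookkeeping around the half-integer $\frac{1}{2}a$ in $g$: one must ensure that $g(m)$ is a genuine integer (so that the congruences make sense) and that, for $a$ odd, the error term is divisible by $a^2$ rather than only by $\frac{1}{2}a^2$. Both follow from the observation that $Q_2=x_0+\frac{1}{2}a(cx_0+sz_0)$ is an integer, which forces $2\mid a(cx_0+sz_0)$, hence $2\mid cx_0+sz_0$ when $a$ is odd; then $cx_0m^2+sz_0m\equiv m(cx_0+sz_0)\equiv 0\pmod 2$ for every $m$, so $\frac{1}{2}a(cx_0m^2+sz_0m)\in\ZZ$ and $\frac{1}{2}a^2 c(cx_0m^2+sz_0m)$ is an integer multiple of $a^2$. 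When $a$ is even, $\frac{1}{2}a\in\ZZ$ already and the error term is visibly a multiple of $\frac{1}{2}a^2$. This closes the induction and produces both displayed congruences, with precisely the $a^2$-versus-$\frac{1}{2}a^2$ dichotomy stated in the lemma.
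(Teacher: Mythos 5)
Your argument is correct, and it is the standard induction used for congruences of this type: fold the recurrence into $Q_{2m+2}=(ac+2)Q_{2m}-Q_{2m-2}$, verify the quadratic ansatz at $m=0,1$, and check that the defect $g(m+1)+g(m-1)-(ac+2)g(m)=-\tfrac{1}{2}a^2c(cx_0m^2+sz_0m)$ dies modulo $a^2$ (resp.\ $\tfrac{1}{2}a^2$), with the parity bookkeeping you give handling the odd-$a$ case. Note that the paper itself does not prove this lemma but imports it as \cite[Lemma 3.1]{glavni}; your proof is a correct self-contained version of essentially that same argument, with the only implicit input being the integrality of $Q_1$ and $P_1$, which is part of the setup since these are coordinates of integer solutions of the Pellian equations.
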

\begin{lemma}\label{lower-Type-a} \hfill
\begin{enumerate}[1)]
  \item Assume that $b=ka,\ k\in \{7,8,10,11,12,13\}$ and $c=c_1^-$. If the equation $P_l=Q_m$ has a solution $(l,m)$ (of Type $1$), then we have
$$
m \geq \frac{1}{4}\left(-2+\sqrt{4+\sqrt{a}}
\right).$$  
   \item Assume that $b=ka,\ k\in \{7,8,10,11,12,13\}$ and $c \in \{c_1^+,c_2^{\pm}\}$. If the equation $P_l=Q_m$ has a solution $(l,m)$ (of Type $1$), then we have
$$
l \geq \frac{1}{12}\left(-2+\sqrt{4+3\sqrt{a}}
\right).$$  
\end{enumerate}
\end{lemma}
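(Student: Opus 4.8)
The plan is to use the congruence relations of Lemma~\ref{lem_6} to force $m$ (resp.\ $l$) to be large whenever a nontrivial solution exists. I would first treat case $1)$, where $c=c_1^-=a+ka-2r$ and the fundamental solutions are of Type~$1$, namely $x_0=2,\ z_0=2,\ x_2=2,\ y_2=-2$ (or the mirrored choice). Plugging these into Lemma~\ref{lem_6} and reducing $c_1^-$ and $s$ modulo $a$ (note $c_1^-\equiv b\equiv ka\equiv 0$ or a small residue, and $s^2=ac+4$, $t^2=bc+4$, $r^2=ab+4$ all reduce to $4\pmod a$, so $s\equiv\pm 2$, $t\equiv\pm 2$, $r\equiv\pm 2\pmod a$) gives, after cancelling the common factor $x_0=x_2=2$, a congruence of the shape
\[
Q_{2m}-P_{2l}\equiv \pm a\bigl(2c_1^-\,m^2+ sz_0\,m\bigr)\mp a\bigl(2b\,l^2+ry_2\,l\bigr)\pmod{a^2\ \text{or}\ \tfrac12a^2},
\]
and since $Q_{2m}=P_{2l}=x$ the left side is $0$. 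Dividing by $a$ one obtains a quadratic congruence in $m$ and $l$ modulo $a$ (or $\tfrac12 a$). The key arithmetic input is that $c_1^-\equiv -4r+\,(\text{something})\equiv$ a controlled residue mod $a$; more precisely $c_1^-=(k+1)a-2r$ so $c_1^-\equiv -2r\pmod a$ and $r^2\equiv 4\pmod a$, hence $2c_1^-\cdot r\equiv -4r^2\equiv -16\pmod a$ and similar identities let me eliminate the unknown signs of $r,s$ and reduce everything to a clean congruence like $4m^2+2\varepsilon m\equiv 0$ or $\pm(\text{const})(m^2\pm m)\equiv(\text{const})(l^2\pm l)\pmod{a}$, up to the factor $\tfrac12$ in the even case.

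The next step is to argue that this congruence cannot hold with both $m$ and $l$ small unless $(l,m)$ is the regular solution $(1,1)$ (which is excluded by the hypothesis $l\ge 1$ together with the earlier identification that the only Type~$1$ solution giving a regular quadruple is $(l,m)=(2,2)$ — one should double-check indices here, but the structure is: the ``trivial'' solution satisfies the congruence identically, and any other must satisfy it by having the quadratic expression divisible by $a$). Concretely, from $l\le m$ (proved in the lemma preceding Matveev's) and the congruence $\equiv 0\pmod{a}$ or $\pmod{\tfrac12 a}$, the integer $2c_1^- m^2+sz_0 m - (2bl^2+ry_2l)$, which is nonzero for a non-regular solution and bounded in absolute value by a constant times $c_1^-m^2\le C a m^2$, must be a nonzero multiple of $a$ (or $\tfrac12 a$); but one needs the sharper observation that after subtracting the regular solution's contribution, the relevant quantity is a nonzero multiple of $a$ and simultaneously at most about $8 m^2$ in size — wait, more carefully: $c_1^-$ and $b$ are themselves $\approx a$, so $2c_1^-m^2\approx 2(k-1)^2 a m^2/(k+1+2\sqrt k)$, i.e.\ $\lesssim$ a small constant times $a m^2$; dividing through by $a$ leaves a bounded expression, and I instead work with the already-divided congruence. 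The cleanest route: show $m^2\lesssim$ (const)$\cdot m^2/a \cdot a$... I will instead follow the standard template — reduce to ``$X\equiv 0\pmod{a}$ where $0<|X|\le 8m^2+2m$ roughly'' which forces $8m^2+2m\ge a$, i.e.\ $m\ge\frac14(-2+\sqrt{4+\sqrt a})$ once the constant is pinned down (note $\sqrt a$, not $a$, suggesting the bound on $|X|$ is $\lesssim \sqrt a\cdot(\text{quadratic in }m)$, presumably because $x=Q_{2m}\approx c\,m$-ish forces $m\lesssim\sqrt a\cdot(\dots)$ via the size of $x$, or because one compares two congruences mod $a$ and mod $b=ka$ simultaneously). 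For case $2)$ the same computation with $c\in\{c_1^+,c_2^\pm\}$ and both Type~$1$ and Type~$2$ fundamental solutions yields a congruence giving $l\ge\frac{1}{12}(-2+\sqrt{4+3\sqrt a})$, the different constants $12$ and $3$ coming from the different leading coefficients ($b x_2 l^2$ with $x_2=2$, versus the extra factor in $c_2$).

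The main obstacle I anticipate is bookkeeping the signs of $r,s,t$ and the $\pm$ in the fundamental solutions consistently across the two (or four) cases, and making sure the ``trivial'' regular solution is correctly subtracted so that what remains is genuinely a \emph{nonzero} residue — otherwise the argument collapses. A secondary subtlety is the even/odd split for $a$: when $a$ is even the modulus is only $\tfrac12 a^2$, so after dividing by $a$ one gets a congruence mod $\tfrac12 a$, which weakens the bound by a factor of $2$ inside the square root, and one must check this is still absorbed by the stated constants. I would handle this exactly as in \cite[Lemma~3.1 and its applications]{glavni} and \cite[Lemma~8]{ahpt}, adapting the constants; the hypothesis $k\in\{7,8,10,11,12,13\}$ enters only through uniform bounds $c_1^\pm,c_2^\pm \le (\text{explicit const})\cdot a$ and the gcd table stated in Section~\ref{section_2}, which controls whether the factor-of-$2$ loss actually occurs.
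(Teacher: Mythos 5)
Your overall template is the right one --- apply Lemma~\ref{lem_6}, reduce the initial data modulo $a$, and convert the resulting quadratic congruence into a size inequality --- but there is a genuine gap at precisely the point that produces the $\sqrt a$ in the statement, and you flag this yourself without resolving it. You assert that since $s^2=ac+4$ and $r^2=ab+4$ reduce to $4\pmod a$, one has $s\equiv\pm2$ and $r\equiv\pm2\pmod a$. That inference is false: $X^2\equiv4\pmod a$ does not force $X\equiv\pm2\pmod a$ for composite $a$, and here $a$ is a large composite number. The paper instead obtains $s\equiv2$ or $s\equiv r\pmod a$ from the explicit recurrence \eqref{sequence_s_nu} (initial terms $s_0=2$, $s_1=r\pm a$), which splits the argument into two cases. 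In the case $s\equiv2\pmod a$, the congruence from Lemma~\ref{lem_6} (using $c_1^-\equiv-2r$, $b\equiv0\pmod a$) reads $-4rm^2\pm4m\equiv\pm2rl\pmod a$; one multiplies by $r$, uses $r^2\equiv4\pmod a$, and then --- this is the key step missing from your sketch --- observes that $a\mid(r-2)(r+2)$ forces $r\equiv\pm2$ only modulo a divisor $a'$ of $a$ with $a'\ge\sqrt a$. Reducing modulo that divisor gives $-16m^2\pm8m\mp8l\equiv0\pmod{a'}$, whence (using $l\le m$ and the fact that this expression is nonzero) $16m^2+16m\ge a'\ge\sqrt a$, which is exactly the stated bound. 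Both of your guesses for the origin of the $\sqrt a$ --- a bound on $x=Q_{2m}$ of size $\sqrt a$ times a quadratic in $m$, or a comparison of congruences modulo $a$ and modulo $ka$ --- are wrong; the $\sqrt a$ is purely this divisor-splitting phenomenon. In the second case $s\equiv r\pmod a$, the modulus is $a/\gcd(a,r)\ge a/2$ and one gets the much stronger bound $m\ge\frac14(-2+\sqrt{4+2a})$, so the $\sqrt a$ case is the binding one.

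A secondary inaccuracy: no ``subtraction of the regular solution'' is needed or performed. The integer $-16m^2\pm8m\mp8l$ is automatically nonzero for $m\ge1$ and $l\le m$ (it equals $-16$ at $m=l=1$), so the congruence directly forces its absolute value to be at least the modulus; your concern that the argument ``collapses'' unless the trivial solution is removed does not arise. Your remarks on the even/odd split for $a$ and on deferring part $2)$ to the analogous computation are consistent with what the paper does, but as written the proposal cannot be completed without supplying the divisor argument above.
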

\begin{proof}
    Proof follows \cite[Lemma 3.2]{glavni} and here we will demonstrate the proof of $1)$. From the recurrent sequence (\ref{sequence_s_nu}), we have $s \equiv 2,r\ (\bmod\ a)$. Also, $b=ka \equiv 0\ (\bmod\ a)$ and $c=c_{1}^{-}\equiv -2 r\ (\bmod\ a)$. Using the previous Lemma, with solutions in Type $1$, we get
    $$  
- 4 r m^2\pm 4m\equiv \pm2rl\pmod{a},\quad \mbox{if}\quad s\equiv 2\pmod{a}
$$
and 
$$  
\ - 4m^2\pm 2m\mp2l\equiv 0\pmod{\dfrac{a}{\gcd (a,r)}},\quad \mbox{if}\quad s\equiv r\pmod{a}.
$$
In the first case, we multiply the congruence by $r$ and since $r^2 \equiv 4\ (\bmod\ a)$ we have $r \equiv \pm2\ (\bmod\ a')$ for some $a'$ that divides $a$ and $a' \geq \sqrt{a}$. So we get
$$-16m^2\pm8m\mp8l \equiv 0\ (\bmod\ a').$$
Using $l\leq m$ it follows that
$$16m^2+16m \geq |-16m^2\pm8m\mp8l|\geq a'\geq \sqrt{a},$$
which implies
\begin{align}\label{1}
    m \geq \frac{1}{4}\left(-2 +\sqrt{4+\sqrt{a}}  \right).\end{align}
In the second case, since $\gcd(a,r)\leq2$ and $l \leq m$, we have
$$4m^2+4m \geq |- 4m^2\pm 2m\mp2l|\geq\frac{a}{\gcd(a,r)}\geq\frac{a}{2},$$
which implies
\begin{align}\label{2}
    m \geq \frac{1}{4}\left(-2+\sqrt{4+2a}\right).
\end{align}
Combining (\ref{1}) and (\ref{2}) we obtain the desired inequality.
\end{proof}

In \cite{glavni} it is demonstrated that $s \equiv \pm2,\pm a\ (\bmod\ r)$ and that the case $s \equiv \pm a\ (\bmod\ r)$ leads to a contradiction if $c=c_2^{\pm}$, which also applies here. Therefore, for solutions of Type $2$, we obtain another lower bound on $l$ and $m$. With only slight changes in calculations, the next result follows similarly as \cite[Lemma 3.4]{glavni}.
\begin{lemma}\label{lower-Type-b}
Assume that $c=c_2^\pm.$ If the equation $P_l=Q_m$ has a solution $(l,m)$ of Type $2$, then we have
$$
m>  \begin{cases}
(14-5\sqrt{7.1})a/4,&\ k=7,\\
(6\sqrt{8}-16)a/4,&\ k=8,\\
(20-6\sqrt{10.1})a/4,&\ k=10, \\
(7\sqrt{11}-22)a/4,&\ k=11,\\
(7\sqrt{12}-24)a/4,&\ k=12,\\
(26-7\sqrt{13.1})a/4,&\ k=13.
\end{cases}
$$
\end{lemma}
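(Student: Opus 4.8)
The plan is to follow the same strategy as \cite[Lemma 3.4]{glavni}, but track the $k$-dependence of the constants. First I would recall that for solutions of Type $2$ we have $m\equiv 1\pmod 2$, so the relevant congruence comes from analysing $P_l=Q_m$ modulo $r$ rather than modulo $a$. Using the recurrences \eqref{sequence_P_l} and \eqref{sequence_Q_m} together with $s\equiv\pm 2\pmod r$ (the case $s\equiv\pm a\pmod r$ having been excluded for $c=c_2^\pm$ as noted in the text), one expresses $Q_m$ and $P_l$ modulo $r^2$ in a form analogous to Lemma \ref{lem_6}, obtaining a quadratic congruence in $m$ (and $l$) modulo $r$ or $r/\gcd(a,r)$. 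Since $r^2=ka^2+4$, the relation $r^2\equiv 4\pmod{a}$ is replaced by the trivial identity modulo $r$, and one instead needs the size of $r$ relative to $a$: from \eqref{pair eqn}, $r=\sqrt{ka^2+4}$, so $r/a\to\sqrt{k}$ from above, and more precisely $r<a\sqrt{k+4/a^2}$ while $r>a\sqrt{k}$, with the lower bound for $a$ from Table~\ref{tab:lowerbounds} making the correction term $4/a^2$ tiny (hence the $7.1$, $10.1$, $13.1$ appearing in place of $7$, $10$, $13$).

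The key computation is to turn the congruence into an inequality. After reducing the quadratic congruence modulo the appropriate divisor of $r$ (which, using $\gcd(a,r)\le 2$, is at least $r/2$), the left-hand side is a quadratic in $m$ with bounded coefficients, and since one has already established $m\le l$ (or the reverse, as appropriate) in the earlier lemma, one can bound $l$ in terms of $m$ and collapse everything to a single inequality of the shape $(\text{const})\,m^2+(\text{const})\,m\ge r/2$. Dividing through and plugging in $r>a\sqrt{k}$ (adjusted to $a\sqrt{k+\varepsilon}$ where needed to get the stated $\sqrt{7.1}$ etc.\ on the correct side) yields a quadratic inequality in $m$; solving it gives $m$ bounded below by a quantity of order $\sqrt{a}$. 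But actually the sharper form claimed here is linear in $a$: this stronger bound comes from exploiting that for $c=c_2^\pm$ one has $s\equiv \pm 2\pmod r$ \emph{and} additional information forcing the linear-in-$m$ term to dominate, so that after multiplying by the appropriate unit one gets a congruence of the form $(\text{small linear expression in }m,l)\equiv 0\pmod{r/\gcd(a,r)}$, and since that expression is nonzero (this must be checked — it is where the exclusion of the regular solutions $(l,m)\in\{(1,1),(3,1)\}$ matters, since those give the expression value $0$) its absolute value is at least $r/2\ge a\sqrt{k}/2$, after subtracting the correction.

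Concretely, I expect the congruence to read, after multiplying by $r$ (using $r^2\equiv 4\pmod{\text{something}}$ is not available, so instead one uses $rs\equiv \pm 2r\equiv \pm 4\pmod{\dots}$ or works directly), something like $\pm 4l \mp 4m \equiv 0$ or $\pm(\text{linear in }m,l)\equiv 0 \pmod{r/\gcd(a,r)}$, whence, using $m\le l$ and the non-vanishing of the left side, $|{\pm}4l\mp 4m|\ge r/2$; combined with the bound on $l$ from Proposition~\ref{proposition-1} being far larger than this would be problematic, so in fact the relevant bound runs the other way and gives $4m$ or so $\ge$ (a constant)$\cdot a$, producing exactly the stated $(14-5\sqrt{7.1})a/4$-type bounds, where $5\sqrt{7.1}$ is $\lceil r/a\rceil$-ish data: $r_1/a_1$ for $k=7$ is $16/6$, and the coefficient $5$ tracks how $r$ grows, i.e.\ $r<\sqrt{7.1}\,a$ is used to replace $r$ by the quantity that makes the difference $14a - $ (bound on the linear term) come out as $(14-5\sqrt{7.1})a/4$ after dividing by $4$. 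The main obstacle is the bookkeeping: one must (i) correctly derive the Type~$2$ analogue of the congruences in Lemma~\ref{lem_6} modulo a divisor of $r$ rather than of $a$, being careful about the cases $\gcd(a,r)=1$ versus $2$ recorded in the $\gcd$ table for $k=12,13$; (ii) verify that the linear form in $m,l$ that appears does not vanish for the putative irregular solution, which is precisely the content that distinguishes this from the regular extensions; and (iii) carry the $4/a^2$ correction through cleanly so that the clean irrational constants $\sqrt{7.1},\sqrt{10.1},\sqrt{13.1}$ (and the exact integers $\sqrt 8,\sqrt{11},\sqrt{12}$ where the $\gcd$ parity makes the correction vanish or absorb) emerge. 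None of these steps is deep, but each is a place where a sign or a factor of $2$ would change the final constants, so the "routine calculations" here genuinely need care; I would lean heavily on the parallel computation in \cite[Lemma 3.4]{glavni} and only redo the parts where the value of $k$ enters.
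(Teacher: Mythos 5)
Your high-level plan does match the paper's: the proof is exactly the computation of \cite[Lemma 3.4]{glavni} redone for each $k$, working modulo $r$ with the case $s\equiv\pm a\pmod r$ already excluded, and converting a congruence into an inequality via $a\sqrt{k}<r<a\sqrt{k+0.1}$. But the step that actually produces the table of constants is absent from your write-up, and the concrete congruence you guess at is wrong. Modulo $r$ one finds $P_l\equiv\pm 2$ or $\pm a$ with \emph{no} dependence on $l$ beyond parity and sign, while $Q_m\equiv\frac{1}{2}amz_0\equiv\pm am$ or $\pm 2m\pmod r$ (using $z_0=\pm t$, $t\equiv\pm2,\pm b\pmod r$ and $ab\equiv-4\pmod r$). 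Three of the four resulting cases give bounds of size roughly $r/2$ or $(r-a)/2$, which comfortably exceed the stated constants. The decisive case is $am\equiv\pm 2\pmod r$: multiplying by $ka$ and using $ka^2\equiv-4\pmod r$ gives $2ka\pm4m\equiv 0\pmod r$, i.e.\ $2ka\pm4m=jr$ for some integer $j$. Since $2ka$ lies strictly between $\lfloor 2\sqrt{k}\rfloor r$ and $(\lfloor 2\sqrt{k}\rfloor+1)r$, enumerating the admissible $j$ yields either $4m\ge(\lfloor2\sqrt{k}\rfloor+1)r-2ka>((\lfloor2\sqrt{k}\rfloor+1)\sqrt{k}-2k)a$ or $4m\ge 2ka-\lfloor2\sqrt{k}\rfloor r>(2k-\lfloor2\sqrt{k}\rfloor\sqrt{k+0.1})a$ (or else $m\ge ka/2$). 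The stated constants are precisely the minima over all cases --- $(14-5\sqrt{7.1})/4$ for $k=7$ but $(6\sqrt{8}-16)/4$ for $k=8$ --- which also explains why the $+0.1$ correction appears in some rows and not others.

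Your proposed mechanism --- a nonzero linear form in $m$ and $l$ congruent to $0$ modulo $r/\gcd(a,r)$, hence of absolute value at least $r/2$ --- cannot yield these constants: $l$ never enters the mod-$r$ reduction at all, the quantity $2ka\pm4m$ is not small (it is comparable to $2\sqrt{k}\,r$, so ``nonzero hence $\ge r/2$'' says nothing), and a bound of the shape $m\gtrsim r/8$ is a different bound from the one claimed. Your worry about the regular solutions $(l,m)=(1,1),(3,1)$ is legitimate but is resolved by the case split itself rather than by a non-vanishing check: the only degenerate identity is $am\equiv\pm am\pmod r$ with $m=1$ and matching signs, and the lemma is only invoked for $m\ge3$. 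So while you point at the right reference and the right modulus, the derivation of $2ka\pm4m\equiv0\pmod r$ and the enumeration of multiples of $r$ --- the content of the lemma --- still has to be supplied.
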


\section{Proof of Theorem \ref{main result}}\label{section_6}
In this section, we complete the proof of Theorem \ref{main result} in two subsections according to the values of c.

\subsection{Proof of Theorem \ref{main result} for $c=c_1^\pm, c_2^\pm$} \label{subsec5.1} \hfill

Combining (\ref{a_p_1})-(\ref{a_p_6}) with Proposition~\ref{proposition-1}, Lemmas~\ref{lower-Type-a} and \ref{lower-Type-b} we obtain the following result.

\begin{lemma} \hfill
    \begin{enumerate}[1)]
\item For the $D(4)$-triples $\{a,ka,c_1^-\}$ with $a=a_p^{(k)}$ $(p\ge 1)$ defined in \eqref{a_p_1}-\eqref{a_p_6}, if the equation $P_l=Q_m$ has a solution $(l,m)$ with $l\ge 1,$ then $p\leq E_k$ and $m \leq 2.48\cdot10^{20}$ where $E_k \in \{70,110,53,65,147,81\}$ for $k \in \{7,8,10,11,12,13\}$ respectively.
\item For the $D(4)$-triples $\{a,ka,c_1^+\}$ with $a=a_p^{(k)}$ $(p\ge 1)$ defined in \eqref{a_p_1}-\eqref{a_p_6}, if the equation $P_l=Q_m$ has a solution $(l,m)$ with $m\ge 3,$ then $p\leq E_k'$ and $l\le 2.56\cdot10^{20}$ where $E_k' \in \{71,111,54,65,149,82\}$ for $k \in \{7,8,10,11,12,13\}$ respectively.
\item For the $D(4)$-triples $\{a,ka,c_2^{\pm}\}$ with $a=a_p^{(k)}$ $(p\ge 1)$ defined in \eqref{a_p_1}-\eqref{a_p_6}, if the equation $P_l=Q_m$ has a solution $(l,m)$ in Type $1$  with $m\ge 3,$ then $p\le E_k''$ and $l\le 2.6\cdot10^{21}$ where $E_k'' \in \{74,116,56,68,$
$156,86\}$ for $k \in \{7,8,10,11,12,13\}$ respectively. If the equation $P_l=Q_m$ has a solution $(l,m)$ in Type $2$  with $m\ge 3$ then $p\le E_k'''$ and $l\le 3.13 \cdot 10^{20} $ where $E_k''' \in \{18,28,13,16,38,21\}$ for $k \in \{7,8,10,11,12,13\}$ respectively.
\end{enumerate}
\end{lemma}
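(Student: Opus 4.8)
The strategy is to combine the upper bounds for the indices from Proposition~\ref{proposition-1} with the lower bounds from Lemmas~\ref{lower-Type-a} and~\ref{lower-Type-b}, both of which grow with $a$. Since $a = a_p^{(k)}$ grows exponentially in $p$ by~\eqref{a_p_1}--\eqref{a_p_6}, the two bounds can only be compatible for small $p$; this yields the thresholds $E_k, E_k', E_k'', E_k'''$, and then substituting that maximal $p$ back into the upper bound on $l$ or $m$ gives the numerical ceilings $2.48\cdot 10^{20}$, etc.

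\emph{Step 1 (case $c = c_1^-$).} Here we work with $m$. Lemma~\ref{lower-Type-a}~1) gives $m \geq \tfrac14(-2+\sqrt{4+\sqrt a})$, equivalently (for large $a$) $m \gtrsim \tfrac14 a^{1/4}$. Proposition~\ref{proposition-1} gives $m/\log(em) < 13.36\cdot 10^{13}\cdot\log^2(21.3a)$. Feeding the lower bound for $m$ into the left-hand side and using $a = a_p^{(k)} \geq \lambda_k^{\,p}/\sqrt k \cdot(1-o(1))$ with $\lambda_k$ the larger root in~\eqref{a_p_1}--\eqref{a_p_6} (e.g.\ $\lambda_7 = 8+3\sqrt7$), one obtains an inequality of the form $\lambda_k^{p/4} \ll p^2\log p$ (after absorbing constants and the $\log(em)$ factor, which is itself $\ll \log a \ll p$). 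This forces $p$ below an explicit threshold; a direct numerical check of that inequality for each $k$ yields $E_k \in \{70,110,53,65,147,81\}$. For $p \leq E_k$ the value $a = a_p^{(k)}$ is then an explicit (large but bounded) number, and plugging the largest such $a$ into Proposition~\ref{proposition-1} and solving $m/\log(em) < 13.36\cdot 10^{13}\log^2(21.3a)$ numerically gives $m \leq 2.48\cdot 10^{20}$.

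\emph{Step 2 (cases $c = c_1^+, c_2^\pm$).} These are identical in spirit but now bound $l$. For $c \in \{c_1^+, c_2^\pm\}$ with Type~$1$ solutions, Lemma~\ref{lower-Type-a}~2) gives $l \geq \tfrac1{12}(-2+\sqrt{4+3\sqrt a})$, while Proposition~\ref{proposition-1} gives $l/\log(el) < 3.34\cdot 10^{13}\log^2(8.09c^2)$ (Type~1) or $< 6.63\cdot 10^{13}\log^2(8.09c^2)$ (Type~2). One must express $c$ in terms of $a$: since $c_1^+ = a+b+2r = (k+1)a + 2r < (\sqrt k + 1)^2 a$ and similarly $c_2^\pm < (ab+4)(a+b+2r) \asymp k^2 a^3$, we have $\log c \ll \log a \ll p$, so again the upper bound is polynomial in $p$ while the lower bound is $\gg a^{1/4} \gg \lambda_k^{p/4}$. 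Numerically solving the resulting inequalities in $p$ produces $E_k'$ (for $c_1^+$) and $E_k''$ (for $c_2^\pm$, Type~1); for Type~2, Lemma~\ref{lower-Type-b} supplies the much stronger lower bound $m \gg a$ (with the $k$-dependent constants listed there), so the inequality $a \ll p^2\log p$ forces the far smaller thresholds $E_k'''$. Substituting the maximal admissible $a$ back into Proposition~\ref{proposition-1} gives the ceilings $l \leq 2.56\cdot 10^{20}$, $l \leq 2.6\cdot 10^{21}$, $l \leq 3.13\cdot 10^{20}$ respectively.

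\emph{Main obstacle.} The conceptual content is routine once the dictionary ``$a = a_p^{(k)}$ grows like $\lambda_k^p$, $\log c \ll p$'' is set up; the real work is bookkeeping. One must carry the exact constants through each of the six values of $k$ and each of the four sub-cases ($c_1^-$, $c_1^+$, $c_2^\pm$ Type~1, $c_2^\pm$ Type~2), being careful that: (i) the $\log(el)$ or $\log(em)$ term on the left is handled correctly (it is bounded using the eventual ceiling, so the argument is mildly self-referential and one should phrase it as: assume $m > 2.48\cdot10^{20}$, derive $p \leq E_k$, then re-derive $m \leq 2.48\cdot 10^{20}$, a contradiction); (ii) the transition from $\sqrt{a}$ or $3\sqrt a$ under the nested radical to a clean power of $a$ is done with honest inequalities; and (iii) the crude estimates $c_1^+ < (\sqrt k+1)^2 a$, $c_2^\pm < (ab+4)c_1^+$ etc.\ are recorded explicitly so the $\log^2(8.09c^2)$ term is genuinely $O(p^2)$. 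The thresholds $E_k$ and the ceilings are then obtained by a finite computer search, which is the step most prone to off-by-one errors and is best verified by direct substitution.
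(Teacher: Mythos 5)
Your proposal is correct and follows exactly the route the paper takes: the paper itself gives no more detail than ``Combining \eqref{a_p_1}--\eqref{a_p_6} with Proposition~\ref{proposition-1}, Lemmas~\ref{lower-Type-a} and~\ref{lower-Type-b} we obtain the following result,'' and your plan simply spells out that combination (exponential growth of $a_p^{(k)}$ in $p$ versus the polynomial-in-$\log a$ upper bounds, then a finite numerical check for the thresholds). No discrepancy to report.
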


For the remaining cases, we will use the following lemma which is a slight modification of the original version of Baker-Davenport reduction method (see \cite[Lemma~5a]{Dujella-Pethoe:1998}). 

\begin{lemma}\label{Dujella-Pethoe}
Assume that $M$ is a positive integer. Let $p/q$ be a convergent of the continued fraction expansion of $\kappa$ such that $q>6M$ and let 
$$ \eta=\parallel\mu q\parallel-M\cdot\parallel\kappa q\parallel,$$ 
where $ \parallel\cdot\parallel $ denotes the distance from the nearest integer. If $\eta>0,$, then there is no solution to the inequality 
$$0<l\kappa-m+\mu<AB^{-l}$$ 
in integers $l$ and $m$ with 
$$\frac{\log(Aq/\eta)}{\log(B)}\leq l\leq M.$$
\end{lemma}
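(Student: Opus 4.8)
\textbf{Proof proposal for Lemma~\ref{Dujella-Pethoe}.}

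The plan is to follow the standard Baker--Davenport reduction argument, which turns an inequality of the shape $0<l\kappa-m+\mu<AB^{-l}$ into a lower bound for $l$ coming from the quality of rational approximation supplied by a convergent $p/q$ of the continued fraction of $\kappa$. First I would suppose, for contradiction, that there is a solution $(l,m)$ with $\frac{\log(Aq/\eta)}{\log B}\le l\le M$. Multiplying the inequality $0<l\kappa-m+\mu<AB^{-l}$ by $q$ gives $0<l(\kappa q)-mq+\mu q<qAB^{-l}$. The key step is then the elementary estimate for the distance to the nearest integer: from $|\,q(l\kappa-m+\mu)\,|<qAB^{-l}$ and the triangle inequality one gets
\[
\parallel \mu q\parallel \;\le\; \parallel l(\kappa q)\parallel + qAB^{-l}
\;\le\; l\parallel \kappa q\parallel + qAB^{-l}
\;\le\; M\parallel \kappa q\parallel + qAB^{-l},
\]
using $l\le M$ and subadditivity of $\parallel\cdot\parallel$ along $l$ copies of $\kappa q$. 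Rearranging, $qAB^{-l}\ge \parallel\mu q\parallel - M\parallel\kappa q\parallel=\eta>0$, so $B^{-l}\ge \eta/(qA)$, i.e. $l\le \frac{\log(qA/\eta)}{\log B}$.

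This is not yet the contradiction we want — it coincides with the lower endpoint of the forbidden range rather than contradicting it — so the argument needs the strict/nonstrict bookkeeping handled carefully, exactly as in \cite[Lemma~5a]{Dujella-Pethoe:1998}: one checks that the hypothesis $q>6M$ forces $\parallel \kappa q\parallel$ to be small enough (in fact $\parallel\kappa q\parallel<\tfrac{1}{2M}$ is automatic, by the classical property $\parallel\kappa q\parallel<1/q'$ where $q'$ is the next denominator, together with $q'>q>6M$), which guarantees that the quantity $M\parallel\kappa q\parallel$ does not swamp $\parallel\mu q\parallel$, so that $\eta$ is genuinely the relevant positive gap and the inequality $l\le\frac{\log(Aq/\eta)}{\log B}$ is incompatible with $l\ge\frac{\log(Aq/\eta)}{\log B}$ except possibly at the single endpoint, which one excludes by the strictness of $0<l\kappa-m+\mu$. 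Thus no such $(l,m)$ exists.

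The only real subtlety — and the step I would be most careful about — is the role of the condition $q>6M$: it is what makes the reduction valid, because it ensures $\parallel\kappa q\parallel$ is small relative to $1/M$ and hence that $\eta>0$ is the correct obstruction; without it the telescoping bound $\parallel l\kappa q\parallel\le l\parallel\kappa q\parallel$ could still hold but $\eta$ might fail to be positive or fail to control the solution set. Everything else is routine: it is a two-line manipulation of the defining inequality together with properties of $\parallel\cdot\parallel$ and of continued-fraction convergents. I would therefore present the proof in three short movements — (i) scale by $q$ and apply the triangle inequality to extract $\eta$; (ii) invoke $q>6M$ and the convergent inequality to justify that $\eta>0$ is the genuine gap; (iii) read off $l\le\frac{\log(Aq/\eta)}{\log B}$ and note the contradiction with the assumed lower bound on $l$ — and refer the reader to \cite{Dujella-Pethoe:1998} for the identical bookkeeping.
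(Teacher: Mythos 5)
The paper does not actually prove this lemma: it is quoted as a slight modification of \cite[Lemma~5a]{Dujella-Pethoe:1998}, and your argument is precisely the standard proof from that source — multiply by $q$, absorb the integer $lp-mq$, and use $\parallel l\kappa q\parallel\le l\parallel\kappa q\parallel\le M\parallel\kappa q\parallel$ to extract $qAB^{-l}>\eta$ — so it is correct and matches the intended proof. Your worry about the endpoint is unnecessary: since the upper bound $l\kappa-m+\mu<AB^{-l}$ is strict, you get $qAB^{-l}>\eta$ strictly and hence $l<\log(Aq/\eta)/\log B$, which already contradicts $l\ge\log(Aq/\eta)/\log B$ (and the hypothesis $q>6M$ is not needed for this implication at all — it only serves to make $\eta>0$ achievable in practice, since $M\parallel\kappa q\parallel<M/q<1/6$).
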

In order to apply Lemma \ref{Dujella-Pethoe} we define parameters depending on $c.$

\textbf{The case $c=c_1^-$}. Dividing $0<\Lambda'<2.6\alpha^{-2l}$  by $\log\alpha$ and using the fact that $\alpha^{-2l}<\beta^{-m}$ we get
\begin{equation}
0<m\kappa-l+\mu<AB^{-m},
\end{equation} 
where 
$$
\kappa:=\frac{\log\beta}{\log\alpha},\quad\mu:=\frac{\log\gamma'}{\log\alpha},\quad A:=\frac{2.6}{\log\alpha},\quad B:=\beta.
$$

\textbf{The case $c\in\{c_1^+, c_2^{\pm}\}$}. Dividing $0<\Lambda<2.6\beta^{-2m}$ and $0<\Lambda<1.5a^2\beta^{-2m}$ by $\log\beta$ and using the fact that we have $\beta^{-2m}<\alpha^{-l}$ leads to an inequality of the form \begin{equation}\label{last}
0<l\kappa-m+\mu<AB^{-l},
\end{equation} 
where we consider solutions of Type $1$
$$
\kappa:=\frac{\log\alpha}{\log\beta},\quad\mu:=\frac{\log\gamma}{\log\beta},\quad A:=\frac{2.6}{\log\beta},\quad B:=\alpha,
$$
and for solutions of Type $2$
$$
\kappa:=\frac{\log\alpha}{\log\beta},\quad\mu:=\frac{\log\gamma}{\log\beta},\quad A:=\frac{1.5a^2}{\log\beta},\quad B:=\alpha.
$$

Let's first observe the case $c=c_1^-$. After at most four steps of reduction, we find that $P_l=Q_m$ implies $1\leq l\leq m \leq11$ in all cases. Combining this with 
Lemma \ref{lower-Type-a}, we get $a \leq 4460544$. Then we explicitly verify these remaining cases and find that the only solution for the equation $P_l=Q_m$ is $(l,m)=(2,2)$ (from fundamental solutions $x_0=2,z_0=2,x_2=2,y_2=-2$), which corresponds to the regular extension of a triple to a quadruple. For $l=0$ we get $x=Q_0=P_0=2$, which gives $d=0$. 

Now, for the cases $c \in \{c_1^+,c_2^{\pm} \}$, we find that after at most four steps of reduction, $P_l=Q_m$ implies $3 \leq m\leq l\leq 8$ in all cases. Combining this with Lemma \ref{lower-Type-a} for solutions in Type $1$, we get $a \leq 10 240 000$. We then explicitly verify these remaining cases and find that the equation $P_l=Q_m$ has no solutions in this range. For solutions in Type $2$, we combine $3 \leq m \leq l \leq 8$ with Lemma \ref{lower-Type-b}, and in all cases, we obtain $a \leq 128$, which contradicts the fact that $b=ka >10^5,\ k=7,8,10,11,12,13$.
Since the relation $m \leq l$ only holds when $m \geq 3$, the final cases $m \in \{0,1,2\}$ are observed in the same way as in \cite{glavni}. We conclude that the only possible intersection $P_l=Q_m$ (besides the trivial $(l,m)=(0,0)$) is the one that corresponds to the regular extension of a triple to a quadruple.

\subsection{Proof of Theorem \ref{main result} with $c=c_3^\pm$}\label{subsec5.2}
In this case, we examine the equation $z=v_m=w_n$ using Lemma \ref{lem:inital_terms}. By \cite[Lemma 5]{Filipin-2009}, we know that if this equation has a solution $(m,n)$, then $n-1\leq m\leq 2n+1$. We now examine the solutions for $2<n<m<2n$. The next result follows as in \cite[Lemma 4.5]{glavni}.

\begin{lemma}\label{lower-n-m}
\begin{enumerate}[i)]
    \item If the equation $z=v_{2m}=w_{2n}$ has a solution $(m,n)$ with $n > 2,$ then $m>0.495b^{-0.5}c^{0.5}.$
    \item If the equation $z=v_{2m+1}=w_{2n+1}$ has a solution $(m,n)$ with $n > 2,$ then $m^2>0.0625b^{-1}c^{0.5}.$
\end{enumerate}
\end{lemma}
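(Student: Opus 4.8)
\textbf{Proof strategy for Lemma \ref{lower-n-m}.}

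The plan is to mirror the argument of \cite[Lemma 4.5]{glavni}: we extract congruence conditions for the recurrent sequences $(v_m)$ and $(w_n)$ modulo a power of $c$, and then use the size of the resulting nontrivial residue to force $m$ (resp.\ $m^2$) to be large relative to $c$. Concretely, since $c=c_3^\pm$ is, by the discussion at the end of Section~\ref{section_2}, the largest of the three admissible values and satisfies $c>b$, the sequence $(w_n)$ defined by $w_0=z_1,\ w_1=\tfrac12(tz_1+cy_1),\ w_{n+2}=tw_{n+1}-w_n$ can be reduced modulo $c$ (or $\tfrac12 c^2$, depending on the parity of $c$) in the style of Lemma~\ref{lem_6}: one obtains $w_{2n}\equiv z_1+\tfrac12 c(\cdots)n^2+\cdots\pmod{c^2}$ and similarly for $w_{2n+1}$, and likewise a reduction of $v_m$ modulo $c$. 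Matching these against the fundamental-solution data from Lemma~\ref{lem:inital_terms} (so $z_0=z_1=\pm2$ in case i) and $z_0=\pm t,z_1=\pm s$ in case ii)) yields a congruence of the form $(\text{small polynomial in }n)\equiv 0\pmod{c'}$, where $c'\mid c$ with $c'\ge \sqrt c$ (using the gcd information $\gcd(c,t)\le 2$ or its analogue, which must be checked for $c=c_3^\pm$).

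The key quantitative step is then purely elementary: the polynomial appearing in the congruence is bounded above by a constant times $tn^2$ (since each application of the recurrence introduces a factor of $t\approx\sqrt{bc}$), so nonvanishing of that residue forces $tn^2\gg c'\ge\sqrt c$, i.e.\ $n^2\gg c^{1/2}/t\gg c^{1/2}/\sqrt{bc}=b^{-1/2}$, which after inserting the relation $n\le m+1$ (or $m\le 2n+1$) from \cite[Lemma 5]{Filipin-2009} and tracking the explicit constants gives the stated bounds $m>0.495\,b^{-0.5}c^{0.5}$ in case i) and $m^2>0.0625\,b^{-1}c^{0.5}$ in case ii). The asymmetry between the two cases (linear bound on $m$ versus quadratic) comes from the different roles of the even/odd index parity: in the even case the dominant term in $w_{2n}-v_{2m}$ modulo $c$ is linear in the index after cancellation, whereas in the odd case one only controls $m^2$.

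The main obstacle I expect is verifying the gcd and parity bookkeeping for $c=c_3^\pm$ precisely enough to guarantee $c'\ge\sqrt c$ and to pin down the constants $0.495$ and $0.0625$: the reductions in Lemma~\ref{lem_6} were stated for the $Q,P$ sequences with modulus built from $a$, and here we need the analogous statement with modulus built from $c$, so one must recompute $v_1,w_1\bmod c$ carefully and confirm that the linear-in-$n$ coefficient is a unit (or has controlled gcd) modulo $c'$. Once that is in place, the inequality manipulations are routine and follow \cite[Lemma 4.5]{glavni} essentially verbatim; for that reason I would present the congruence derivation in detail and then simply state that the remaining estimates are identical to the cited lemma.
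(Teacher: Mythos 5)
Your overall instinct---run the congruence method of Lemma~\ref{lem_6} with the modulus built from $c$ instead of $a$, using the fundamental solutions from Lemma~\ref{lem:inital_terms}---is the right starting point, and it is indeed how \cite[Lemma 4.5]{glavni} (to which the paper defers without giving details) begins: one expands $v_{2m}\equiv z_0+\frac12 c(az_0m^2+sx_0m)$ and $w_{2n}\equiv z_1+\frac12 c(bz_1n^2+ty_1n)$ modulo $c^2$ (or $\frac12c^2$), and similarly for odd indices, obtaining a congruence such as $\pm am^2+sm\equiv\pm bn^2+tn\pmod{c}$ in case i). But your quantitative step is where the argument breaks. The standard mechanism is \emph{not} ``a nonzero residue must exceed the modulus, hence $tn^2\gg c'$''; it is the reverse: one assumes $m$ is \emph{below} the claimed bound, checks that then both sides of the congruence are smaller than the modulus in absolute value (for case i), $t\approx\sqrt{bc}$ and $n\le m<0.495\,b^{-1/2}c^{1/2}$ give $bn^2+tn<c$), concludes the congruence is an exact equality, and then shows that equality is impossible for $n>2$. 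That impossibility argument---the actual crux---is entirely absent from your sketch; without it you have no reason the residue is nonzero in the first place.

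Moreover, the size estimate you do write down does not produce the stated bounds. From $tn^2\gg c'\ge\sqrt{c}$ you derive $n^2\gg c^{1/2}/t\approx c^{1/2}/\sqrt{bc}=b^{-1/2}$, which is a vacuous inequality (the right-hand side is below $1$) and in any case has the wrong shape: case i) requires $m\gg b^{-1/2}c^{1/2}$, i.e.\ $m^2\gg c/b$, and case ii) requires $m^2\gg b^{-1}c^{1/2}$. The exponent of $c$ in your conclusion is off by a factor of $c^{1/2}$ at least. Your idea of passing to a divisor $c'\mid c$ with $c'\ge\sqrt{c}$ is genuinely relevant, but only in the odd-index case ii), where the initial terms $z_0=\pm t$, $z_1=\pm s$, $x_0=y_1=r$ bring products like $st$ into the congruence and one exploits $s^2\equiv t^2\equiv 4\pmod{c}$ to reduce modulo such a $c'$; this is precisely what accounts for the weaker $c^{0.25}$-type bound on $m$ there, versus the full modulus $c$ and the $c^{0.5}$ bound in case i). As written, you have applied the $\sqrt{c}$-reduction to the wrong case and lost the ``congruence becomes equality'' contradiction that drives both bounds, so the proposal does not establish the lemma.
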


Filipin proved in \cite{Filipin-2008} that $z=v_m=w_n,$ for $n>2,$ implies
\begin{align}
\dfrac{m}{\log (m+1)}< 6.543\cdot 10^{15} \log^2 c.
\end{align}
Combining this with Lemma \ref{lower-n-m}, in the case of even indices, we get
\begin{equation}\label{mn-p-even}
\frac{2\cdot0.495b^{-0.5}c^{0.5}}{\log(2\cdot0.495b^{-0.5}c^{0.5}+1)}<6.543\cdot10^{15}\log^2c,
\end{equation}
and in the case of odd indices, we get the inequality
\begin{equation}\label{mn-p-odd}
\frac{2\cdot0.0625^{0.5}b^{-0.5}c^{0.25}+1}{\log(2\cdot0.0625^{0.5}b^{-0.5}c^{0.25}+2)}<6.543\cdot10^{15}\log^2c.
\end{equation}
The solutions obtained from these inequalities are summarized in the following lemma.

\begin{lemma}\label{bounds}
For the $D(4)$-triples $\{a,ka,c_3^\pm\}$ with $a=a_p^{(k)}$ $(p\ge 1)$ defined in \eqref{a_p_1}-\eqref{a_p_6}, if $z=v_{2m}=w_{2n}$ has a solution $(m,n)$, then $p\le E_k$  and $m\le 5.2\cdot10^{21}$ where $E_k \in \{9,14,6,8,19,10\}$ for $k\in \{7,8,10,11,12,13\}$ respectively. However, if $z=v_{2m+1}=w_{2n+1}$ has a solution $(m,n)$, then $p\le F_k$ and $m\le 4.3\cdot10^{22},$ where $F_k \in \{25,40,19,23,53,29\}$ for $k\in \{7,8,10,11,12,13\}$ respectively.
\end{lemma}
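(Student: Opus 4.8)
The plan is to turn the transcendental inequalities \eqref{mn-p-even} and \eqref{mn-p-odd} into explicit numerical bounds on the index $p$ (equivalently, on $a=a_p^{(k)}$), separately for each $k\in\{7,8,10,11,12,13\}$ and for each parity of the indices $m,n$. First I would record, for each $k$, the closed form of $a_p^{(k)}$ from \eqref{a_p_1}--\eqref{a_p_6}, which gives $a_p^{(k)}\sim C_k\lambda_k^p$ for the obvious dominant eigenvalue $\lambda_k$ (e.g.\ $\lambda_7=8+3\sqrt7$, $\lambda_8=3+\sqrt8$, etc.). Next I would substitute $b=ka$ and $c=c_3^\pm$ into the two inequalities; here one uses that $c_3^\pm=(a^2b^2+6ab+9)(a+b\pm 2r)\mp 4r(ab+3)$, so that $c_3^\pm$ is a polynomial in $a$ of degree $7$ with leading coefficient $k^3$ (up to the lower-order correction from $r\sim\sqrt{k}\,a$), whence $c_3^\pm = c_3^\pm(a)$ is an explicit increasing function of $a$. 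Consequently $b^{-0.5}c^{0.5}$ and $b^{-0.5}c^{0.25}$ in \eqref{mn-p-even}--\eqref{mn-p-odd} become explicit increasing functions of $a$, while the right-hand sides grow only like $\log^2 c \sim 49\log^2 a$. The left-hand sides grow like a positive power of $a$ (roughly $a^{3}$ in the even case and $a^{3/2}$ in the odd case), so the inequalities fail for all sufficiently large $a$, and one solves numerically for the largest $a$ — hence the largest $p$ — for which they can still hold. Rounding up gives the stated $E_k$ (even case) and $F_k$ (odd case).

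Having bounded $p$, it remains to bound $m$. For the even case, Filipin's inequality $m/\log(m+1)<6.543\cdot10^{15}\log^2 c$ together with the worst-case value of $c$ (attained at $p=E_k$, $k=12$, since that maximises $a$ hence $c$) gives, after the standard manipulation $m<6.543\cdot10^{15}\log^2 c\cdot\log(m+1)$ and a single bootstrap iteration, $m\le 5.2\cdot10^{21}$. For the odd case one combines $m/\log(m+1)<6.543\cdot10^{15}\log^2 c$ with the lower bound $m^2>0.0625\,b^{-1}c^{0.5}$ of Lemma~\ref{lower-n-m}(ii) only indirectly — actually the cleanest route is again just Filipin's bound evaluated at the largest admissible $c$, which yields $m\le 4.3\cdot10^{22}$; the lower bound of Lemma~\ref{lower-n-m} is what was already used to cap $p$, and is not needed a second time here. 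In both cases the numerical constants should be taken generously so that the single value of $c=c_3^+(a_{E_{12}}^{(12)})$ (or the analogous odd-case maximiser) dominates all six families.

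The main obstacle I anticipate is purely bookkeeping rather than conceptual: one must verify that, across all six values of $k$ and both signs $\pm$ in $c_3^\pm$, the lower-order terms (the $6ab+9$ and $\mp 4r(ab+3)$ corrections to $c_3^\pm$, and the $+1$, $+2$ summands inside the logarithms in \eqref{mn-p-even}--\eqref{mn-p-odd}) do not push the threshold $p$ above the claimed $E_k$ or $F_k$. Because $a_p^{(k)}\ge a_1^{(k)}\ge 1$ and, more usefully, because by Table~\ref{tab:lowerbounds} we only care about $a$ already quite large, these correction terms are genuinely negligible and the estimates are not tight; so a short interval-arithmetic or monotonicity argument for each $k$ suffices. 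I would present one representative case (say $k=7$, $c=c_3^+$, even indices) in full and then state that the remaining eleven combinations are handled identically, exactly as the proof of \cite[Lemma~4.5]{glavni} and the surrounding discussion indicate.
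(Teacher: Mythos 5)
Your overall strategy is exactly what the paper does (the paper gives no explicit proof of this lemma; it is a numerical verification of \eqref{mn-p-even}--\eqref{mn-p-odd} for each $k$ and each parity, followed by feeding the largest admissible $c$ back into Filipin's bound to cap $m$). However, there is a concrete computational error in your setup: $c_3^\pm=(a^2b^2+6ab+9)(a+b\pm 2r)\mp 4r(ab+3)$ with $b=ka$ and $r\sim\sqrt{k}\,a$ is a polynomial-type expression in $a$ of degree $5$ with leading coefficient $k^2(\sqrt{k}\pm 1)^2$, not degree $7$ with leading coefficient $k^3$ (you appear to have read off $c_4^\pm$, whose leading block is $a^3b^3$). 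Consequently your growth rates are off: $b^{-0.5}c^{0.5}\sim k(\sqrt{k}\pm 1)\,a^{2}$ in the even case (not $a^{3}$) and $b^{-0.5}c^{0.25}\sim (\sqrt{k}\pm1)^{1/2}a^{3/4}$ in the odd case (not $a^{3/2}$).

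This matters because the entire content of the lemma is the specific thresholds $E_k$, $F_k$: overestimating $c$ makes the left-hand sides of \eqref{mn-p-even}--\eqref{mn-p-odd} appear larger than they are, so the inequalities would seem to fail at smaller $p$ than they actually do, and your computation would output thresholds strictly below the stated ones (e.g.\ for $k=7$, even indices, the correct degree-$5$ value of $c$ makes \eqref{mn-p-even} borderline precisely at $p=9$, matching $E_7=9$, whereas degree $7$ would already rule out $p=8,9$). That is an unsound exclusion of cases that the correct computation must still handle. With the degree corrected, the rest of your plan — monotonicity in $a$, solving for the threshold $p$, and then bounding $m$ via Filipin's inequality evaluated at the largest admissible $c$ over all six families (separately for the even and odd maximisers, which is why $4.3\cdot 10^{22}$ exceeds $5.2\cdot 10^{21}$) — is sound and reproduces the stated constants.
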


Now, for the remaining small values of $p$, by using Lemma \ref{Dujella-Pethoe} and Lemma \ref{bounds} in the same way as in \cite{glavni} we get that $z=v_m=w_n$ implies $n \leq m \leq 2$. In these small ranges, we verify that all solutions of $z=v_m=w_n$ give the extension of a $D(4)$-triple $\{a,b,c\}$ to a regular quadruple. This completes the proof of Theorem \ref{main result}.

\section*{Acknowledgements}
The authors are supported by the Croatian Science Foundation, grant HRZZ IP-2022-10-5008.




\vspace{20pt}
University of Split, Faculty of Science, Ru\dj{}era Bo\v{s}kovi\'{c}a 33,
21000 Split, Croatia\\
Email: marbli@pmfst.hr \\[6pt]
University of Split, Faculty of Science, Ru\dj{}era Bo\v{s}kovi\'{c}a 33,
21000 Split, Croatia\\
Email: pradic@pmfst.hr \\[6pt]

\end{document}